\newcommand{\pr}[1]{{\color{red}#1}}
\newtheorem{theorem}{Theorem}
\newtheorem{conjecture}[theorem]{Conjecture}
\newtheorem{lemma}[theorem]{Lemma}
\newtheorem{proposition}[theorem]{Proposition}
\theoremstyle{definition}
\newtheorem{definition}[theorem]{Definition}
\newtheorem{remark}[theorem]{Remark}
\author{Pablo Romero\footnote{Facultad de Ingenier\'ia, Universidad de la Rep\'ublica, Montevideo, Uruguay. E-mail address: \texttt{promero@fing.edu.uy}}}
\date{}
\begin{document}

\title{There are finitely many\\ uniformly most reliable graphs of corank 5}

\maketitle

\begin{abstract}\let\thefootnote\relax
If $G$ is a simple graph and $\rho\in[0,1]$, the reliability $R_G(\rho)$ is the probability of $G$ being connected after each of its edges is removed independently with probability $\rho$. A simple graph $G$ is a \emph{uniformly most reliable graph} (UMRG) if $R_G(\rho)\geq R_H(\rho)$ for every $\rho\in[0,1]$ and every simple graph $H$ on the same number of vertices and edges as $G$. Boesch conjectured that, if $n$ and $m$ are such that there exists a connected simple graph on $n$ vertices and $m$ edges, then there also exists a UMRG on the same number of vertices and edges. Some counterexamples to Boesch's conjecture appeared in the literature. It is known that Boesch's conjecture holds whenever the corank, defined as $c=m-n+1$, is at most $4$ (and the corresponding UMRGs are fully characterized). Ath and Sobel conjectured that Boesch's conjecture holds whenever the corank $c$ is between $5$ and $8$, provided that the number of vertices is at least $2c-2$. It is known that for each positive integer $s$ there is no uniformly most reliable graph of corank $5$ and $12s+4$ vertices. Here it is proved that there are only finitely many uniformly most reliable graphs of corank $5$. This is in strong contrast with classes of graphs whose corank is not greater than $4$ for which uniformly most reliable graphs always exist.
\end{abstract}

\renewcommand{\labelitemi}{--}

\section{Introduction}\label{section:intro}
If $G$ is a simple graph and $\rho \in [0,1]$, the  \emph{reliability of $G$ with failure probability $\rho$}, denoted $R_G(\rho)$, is the probability of $G$ being connected after each of its edges is removed independently with probability $\rho$. Given positive integers $n$ and $m$ such that $n-1\leq m \leq \binom{n}{2}$, the question arises as to whether there exists a graph $G$ in the class $\mathcal{C}_{n,m}$ of connected simple graphs on $n$ vertices and $m$ edges such that 
$R_G(\rho) \geq R_{H}(\rho)$ for every $H$ in $\mathcal{C}_{n,m}$ and every $\rho\in[0,1]$. Such a simple graph $G$ is called a \emph{uniformly most reliable graph} (UMRG). This concept was introduced in 1986 by Boesch in his seminal article~\cite{1986-Boesch}. 


The corank of each graph $G$ in $\mathcal{C}_{n,m}$ is defined as $m-n+1$. 
We will also say that the corank of the class $\mathcal{C}_{n,m}$ is $m-n+1$. Regarding dense graphs, it is known that there exists infinitely many classes $\mathcal{C}_{n,m}$ in which there is no UMRG thus Boesch's conjecture was disproved~\cite{2014-Brown,1981-Kelmans,1991-Myrvold}. Regarding sparse graphs, in each nonempty class $\mathcal{C}_{n,m}$ having corank at most $4$  there exists precisely one UMRG. The reader is invited to consult the survey~\cite{2022-Survey} for further details. 

Ath and Sobel~\cite{2000-Ath} proposed the following weaker conjecture in 2000.
\begin{conjecture}[Ath-Sobel~\cite{2000-Ath}]\label{conjecture:ath-sobel}
If a nonempty class $\mathcal C_{n,m}$ has corank $c\in\{5,6,7,8\}$ and $n\geq 2c-2$, then $\mathcal C_{n,m}$ contains at least one UMRG.
\end{conjecture}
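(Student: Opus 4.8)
The plan is to show that, for each admissible pair $(n,m)$ in the stated range, a single explicitly constructed graph $G^*_{n,m}$ simultaneously maximizes every coefficient of the reliability polynomial, which is a sufficient condition for it to be a UMRG. Writing $p=1-\rho$ for the edge reliability, I would work with the expansion
\[
R_G(\rho)=\sum_{j=n-1}^{m} N_j(G)\,p^{j}(1-p)^{m-j},
\]
where $N_j(G)$ counts the connected spanning subgraphs of $G$ with exactly $j$ edges. Here $N_m(G)=1$ always and $N_{n-1}(G)$ is the number of spanning trees, so the extreme coefficients are governed by classical extremal results. If one graph $G^*$ satisfies $N_j(G^*)\ge N_j(H)$ for every $j$ and every $H\in\mathcal C_{n,m}$, then $R_{G^*}\ge R_H$ throughout $[0,1]$; establishing this coefficientwise domination is the goal.

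First I would carry out a structural reduction. A UMRG in a class of corank $c\ge 1$ must be $2$-edge-connected, since a bridge forces $N_{m-1}<m$ and hence strictly smaller reliability as $\rho\to 0$. Any $2$-edge-connected graph of corank $c$ is a subdivision of a \emph{base} multigraph $H$ in which every vertex has degree at least $3$; by the handshake identity and the corank relation, such a base has at most $2c-2$ vertices and at most $3c-3$ topological edges. Consequently, for each fixed $c\in\{5,6,7,8\}$ there are only finitely many homeomorphism types to examine, and within a fixed type a member of $\mathcal C_{n,m}$ is determined by the vector $(\ell_1,\dots,\ell_k)$ of subdivision lengths assigned to the topological edges, subject to $\sum_i \ell_i=m$. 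The hypothesis $n\ge 2c-2$ guarantees that there are enough subdivision vertices to realize the balanced assignment (lengths as equal as possible) for the most symmetric base.

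Next I would express each coefficient $N_j$ as an explicit function of $(\ell_1,\dots,\ell_k)$ for every base type. For the spanning-tree count $N_{n-1}$ this yields a product and sum formula to which a majorization (Schur-concavity) argument applies, showing that balancing the path lengths maximizes $N_{n-1}$ within a fixed base; a parallel analysis handles the top coefficients, which depend on the cycle structure and therefore favor large girth. The candidate $G^*_{n,m}$ would then be the balanced subdivision of the single base selected by the extremal analysis, conjecturally the most symmetric cubic multigraph on $2c-2$ vertices.

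The hard part will be the simultaneous, cross-structural comparison of the intermediate coefficients. The boundary coefficients may a priori be maximized by different graphs, and the central difficulty is to prove that no pair of reliability polynomials in the class crosses on $(0,1)$, that is, that the graph winning at the spanning-tree end also wins at the dense end and everywhere between. I expect this to require a delicate interlacing or log-concavity argument on the sequence $(N_j)$, combined with sharp estimates that exploit $n\ge 2c-2$ to exclude the crossings that would otherwise obstruct existence. Controlling these crossings uniformly across all finitely many base types, for each corank in $\{5,6,7,8\}$, is where the main effort lies.
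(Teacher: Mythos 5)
There is a fundamental problem: the statement you are attempting to prove is presented in the paper only as a conjecture of Ath and Sobel, and the paper exists precisely to refute it in the case $c=5$. Theorem~\ref{theorem:principal} already gives infinitely many classes $\mathcal{C}_{12s+4,12s+8}$ of corank $5$ with $n\geq 16>2c-2$ containing no UMRG, and the main result (Theorem~\ref{theorem:main2}) strengthens this: $\mathcal{C}_{n,n+4}$ contains no UMRG for every sufficiently large $n$ (the explicit computation after Proposition~\ref{proposition:least} gives $\mu_5(G_n)-\mu_5(H_n)>0$ for all $n\geq 167$). So no proof of the conjecture can exist for $c=5$, and any argument along your lines must contain an error.

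The error sits exactly where you flag ``the hard part,'' and the paper makes the failure precise. Your strategy requires a single graph $G^*$ with $N_j(G^*)\geq N_j(H)$ for all $j$; since $N_j(G)=\binom{m}{j}-\mu_{m-j}(G)$, this is equivalent to $G^*$ simultaneously minimizing every $\mu_k$. But Proposition~\ref{proposition:min4} shows that the unique graph minimizing $(\mu_1,\ldots,\mu_4)$ lexicographically is the balanced (vertex-fair) subdivision $G_n$ of the Wagner graph $W$, while the deliberately \emph{unbalanced} subdivision $H_n$ of Definition~\ref{definition:Hn} satisfies $\mu_5(H_n)<\mu_5(G_n)$ for large $n$; since $\mu_{m-n+1}(G)=\binom{m}{m-n+1}-t(G)$ and here $m-n+1=5$, this says $t(H_n)>t(G_n)$. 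This also refutes your specific majorization step: within a fixed base the spanning-tree count is $t=\sum_{T}\prod_{e\notin T}\ell_e$, summed over spanning trees $T$ of the base, which is multilinear but \emph{not} symmetric in $(\ell_1,\ldots,\ell_k)$ unless the base is edge-transitive. The Wagner graph has two edge orbits (rim edges and chords), and lengthening the chord chain $e_1$ at the expense of the rim chain $e_5$ strictly increases $t$ for large $n$, so Schur-concavity does not apply and the balanced subdivision does not maximize $N_{n-1}$. Consequently the two ends of the reliability polynomial are won by different graphs, and by Lemma~\ref{lemma:strategy} (combining both parts of Theorem~\ref{theorem:optlocal}) neither $G_n$, nor $H_n$, nor any third graph can be a UMRG in $\mathcal{C}_{n,n+4}$ for $n\geq n_0$. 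Your structural reduction (finitely many base types, subdivision-length vectors, the bound of $2c-2$ vertices for a cubic base) does match the paper's machinery in Sections~\ref{section:nodefair} and~\ref{section:main}; it is the conjectured coefficientwise domination, and in particular the balancing heuristic at the spanning-tree coefficient, that is provably false for corank $5$. Whether the conjecture holds for $c\in\{6,7,8\}$ remains open, but it cannot be established by an argument that covers $c=5$.
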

Furthermore, explicit candidates for such UMRGs were also proposed in~\cite{2000-Ath}. 
Conjecture~\ref{conjecture:ath-sobel} was disproved when $c=5$. 
\begin{theorem}[\cite{2023-RomeroSafe}]\label{theorem:principal}
For each positive integer $s$ there is no UMRG in the class 
$\mathcal{C}_{12s+4,12s+8}$. 
\end{theorem}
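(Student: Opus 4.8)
The plan is to argue entirely with the reliability polynomial written in the connected-subgraph basis. With $n=12s+4$, $m=12s+8$ and corank $c=5$, every $G\in\mathcal C_{n,m}$ satisfies
$$R_G(\rho)=\sum_{i=0}^{5}C_i(G)\,(1-\rho)^{\,m-i}\rho^{\,i},$$
where $C_i(G)$ counts the connected spanning subgraphs of $G$ with $m-i$ edges; thus $C_0=1$, $C_1(G)=m-(\text{number of bridges})$, and $C_5(G)=\tau(G)$ is the number of spanning trees. I would first extract necessary conditions for a UMRG by expanding $R_G$ at the two endpoints. Setting $q=1-\rho$ and expanding at $\rho=1$, the coefficient of $q^{\,n-1}$ is $\tau(G)$ and the subsequent coefficients are governed, in order, by $C_5,C_4,C_3,C_2,C_1$; hence a UMRG must be a \emph{lexicographic maximiser} of $(C_5,C_4,C_3,C_2,C_1)$. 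Expanding at $\rho=0$ gives the mirror-image conclusion: a UMRG must lexicographically maximise $(C_1,C_2,C_3,C_4,C_5)$, so it must be $2$-edge-connected ($C_1=m$) and then have as few $2$-edge-cuts as possible (maximal $C_2$). The whole argument turns on showing that these two requirements, imposed on one graph, are simultaneously unsatisfiable when $n\equiv 4\pmod{12}$.

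To make the extremal conditions explicit I reduce to subdivisions of a finite family of skeletons. Since a spanning-tree maximiser of positive corank is $2$-edge-connected, suppressing its degree-$2$ vertices yields a corank-$5$ multigraph (a \emph{core}) of minimum degree at least $3$; from $2|E|\ge 3|V|$ and $|E|=|V|+4$ one gets $|V|\le 8$, so there are finitely many cores and every candidate is a subdivision of one. By the weighted matrix–tree theorem, the subdivision obtained by replacing edge $j$ of a core $H$ by a path of length $\ell_j$ has
$$\tau=\sum_{T\in\mathcal T(H)}\ \prod_{j\notin T}\ell_j,$$
a homogeneous degree-$5$ polynomial in the $\ell_j$ whose leading balanced value is $\tau(H)\,(m/|E(H)|)^5$. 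Comparing cores therefore reduces to maximising $\tau(H)/|E(H)|^5$, which favours the densest cores — the cubic graphs on $8$ vertices and $12$ edges — over sparser ones such as the six-path generalised theta graph (for which the density is only $1/1296$, roughly half that of the cube). I would pin down the optimal cubic core and, using its edge-symmetry, show its spanning-tree-optimal subdivisions have all path lengths as equal as possible. Note that the $12$ edges of this core are exactly what makes the residue modulo $12$ the governing parameter.

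The number-theoretic crux is that $m=12s+8\equiv 8\pmod{12}$: distributing $m$ edges as evenly as possible over the $12$ core edges forces eight paths of length $s+1$ and four of length $s$, and the automorphism group of the core does not act transitively on the $4$-element subsets of its edges chosen to be short. Consequently there are \emph{non-isomorphic} balanced subdivisions $G_a,G_b$ that tie for the maximum spanning-tree count, $C_5(G_a)=C_5(G_b)$, and that also share $C_1=m$ and the same $C_2$ (since the $3$-edge-connected core contributes no cross-path $2$-edge-cut, $C_2$ depends only on $\sum_j\binom{\ell_j}{2}$, which is common to all balanced subdivisions), yet have their intermediate coefficients ordered in opposite ways: I expect $C_4(G_a)>C_4(G_b)$ while $C_3(G_a)<C_3(G_b)$. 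The $\rho\to 1$ expansion ranks $C_4$ above $C_3$, so $R_{G_a}$ overtakes $R_{G_b}$ near $\rho=1$; the $\rho\to 0$ expansion ranks $C_3$ above $C_4$, so $R_{G_b}$ overtakes $R_{G_a}$ near $\rho=0$. Hence the two reliability polynomials genuinely cross on $(0,1)$, any UMRG would have to match the coefficient pattern of $G_a$ from the $\rho\to1$ side and of $G_b$ from the $\rho\to0$ side, and no graph can do both; therefore $\mathcal C_{12s+4,\,12s+8}$ has no UMRG.

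The main obstacle is the central extremal computation: proving that the spanning-tree optimum over all corank-$5$ cores is attained by the specific cubic core claimed (ruling out the other cores of order at most $8$ and the competing near-balanced subdivisions), and then computing the coefficient vectors $(C_3,C_4)$ of the tied optima $G_a,G_b$ precisely enough to establish the sign reversal. This is exactly where $n\equiv 4\pmod{12}$ is used twice — once to force the frustrated $8$-of-$12$ balancing that produces the tie in $C_5$, and once to guarantee that the resulting optima are non-isomorphic with opposed secondary coefficients. Granting these extremal data, the endpoint expansions and the passage from a genuine crossing to the non-existence of a UMRG are routine.
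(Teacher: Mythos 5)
Your scaffolding is sound and coincides with the strategy the paper builds around this cited result: the two endpoint expansions giving the lexicographic necessary conditions (your $(C_1,\ldots,C_5)$ and $(C_5,\ldots,C_1)$ conditions are exactly Theorem~\ref{theorem:optlocal} restated via $C_i=\binom{m}{i}-\mu_i$), the reduction to subdivisions of corank-$5$ cores on at most $8$ vertices (the distillations of Section~\ref{section:basics}), and the formula $\tau=\sum_{T}\prod_{j\notin T}\ell_j$. But your central mechanism has a genuine gap. You assume (i) that the spanning-tree optimum is attained at \emph{balanced} subdivisions of the best core, and (ii) that the residue $8\bmod 12$ then forces a tie $C_5(G_a)=C_5(G_b)$ between non-isomorphic balanced subdivisions with opposed secondary coefficients $C_4(G_a)>C_4(G_b)$, $C_3(G_a)<C_3(G_b)$. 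Neither is established (you write ``I expect''), and (i) is false at the order that matters: fairness of the lengths only maximizes the term $\phi^{(5)}_{12}(\ell_1,\ldots,\ell_{12})$, and the gain of a fair tuple over a $\pm1$ perturbation is $O(n^3)$, whereas the induced-cut corrections that must be subtracted from $\phi^{(5)}_{12}$ to get $\tau$ — in particular the $C_4$-separating $5$-edge-cuts of $W$ (Lemma~\ref{lemma:nontrivial2}) — depend on the placement at order $n^4$. This is precisely why the paper's \emph{unbalanced} graph $H_n$ has strictly more spanning trees than the fair $G_n$ for all large $n$ (Proposition~\ref{proposition:least}, where $a^{(5)}_n=7\lfloor n/12\rfloor^4+O(n^3)$ dominates the $O(n^3)$ fairness loss $a^{(1)}_n$). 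So your balancing heuristic cannot locate the lex-maximizers of $(C_5,C_4,\ldots)$, and the asserted tie and sign reversal are unsupported; your density comparison $\tau(H)/|E(H)|^5$ only pins down the $n^5$ coefficient and does not even decide between $W$ ($\tau=392$) and $Q$ ($\tau=384$).

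There is a second, structural gap: a genuine crossing of $R_{G_a}$ and $R_{G_b}$ does not by itself preclude a third graph dominating both, so the conclusion needs the uniqueness half of Lemma~\ref{lemma:strategy}, which your sketch never supplies. The actual proof (mirrored in Sections~\ref{section:nodefair}--\ref{section:main}) proceeds by showing the near-$\rho=0$ lexicographic optimum is \emph{unique}: min-$\mu_2$ forces a fair graph with cubic $3$-edge-connected distillation (Lemma~\ref{lemma:min2}), min-$\mu_3$ forces vertex-fairness with $D(G)\in\{Q,W\}$ (Lemmas~\ref{lemma:min3} and~\ref{lemma:QoW}), and min-$\mu_4$ isolates the single graph $W\odot_s X_8$ for $m\equiv 8\pmod{12}$ (cf.\ Definition~\ref{definition:Gn} and Proposition~\ref{proposition:min4}); then one only needs \emph{one witness} with more spanning trees to violate $t$-optimality (Remark~\ref{remark:toptimality}), with no need to identify the true $\tau$-maximum. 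Your route inverts this: it requires determining the exact set of $\tau$-maximizers over all of $\mathcal{C}_{n,m}$ — a substantially harder problem that your sketch does not solve — while omitting the $\mu_3/\mu_4$ minimization needed for uniqueness on the $\rho\to 0$ side (you only assert shared $C_1$ and $C_2$, which follow from $3$-edge-connectivity of the core but do not break ties at $C_3$ and $C_4$). To repair the argument, replace the tie-and-crossing mechanism by: (a) the uniqueness chain above, and (b) an explicit spanning-tree comparison against a single competitor, which is exactly how the cited proof uses the hypothesis $n=12s+4$.
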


It is known~\cite{1991-Myrvold} that there is precisely one UMRG 
in each of the classes of corank $5$ and $n$ vertices where $n\in \{5,6,7,8\}$. By Theorem~\ref{theorem:principal}, there exists infinitely many classes of corank $5$ with no UMRG. Here, we will prove that there are only finitely many classes $\mathcal{C}_{n,m}$ of corank $5$ having some UMRG. As each class $\mathcal{C}_{n,m}$ is finite, there are only finitely many UMRG having corank $5$. This is in strong contrast with classes of graphs whose corank is not greater than $4$ for which UMRG always exist. 

This article is organized as follows. A proof strategy is given in Section~\ref{section:strategy}. A background including some concepts from graph theory as well as preliminary results is presented in Section~\ref{section:basics}. 
The concept of vertex-fair graph is introduced in Section~\ref{section:nodefair}. The main result of this work is given in Section~\ref{section:main}.


\section{Proof strategy}\label{section:strategy}
Let $G$ be a graph in $\mathcal{C}_{n,m}$. An \emph{edge-cut} of $G$ is a set $F$ of edges of $G$ such that $G-F$ is disconnected. A \emph{$k$-edge-cut} is an edge-cut of size $k$. 
We denote $\mu_k(G)$ the number of $k$-edge-cuts in $G$. Clearly, for each $\rho\in [0,1]$,
\begin{equation*}
R_G(\rho) = 1-\sum_{k=0}^{m}\mu_k(G)\rho^k(1-\rho)^{m-k}.    
\end{equation*}

The following theorem can be proved using elementary calculus.
\begin{theorem}[Brown and Cox~\cite{2014-Brown}]\label{theorem:optlocal}
Let $G$ and $H$ be graphs in $\mathcal{C}_{n,m}$.
\begin{enumerate}[label=(\roman*)]
    \item\label{it1:optlocal} If there exists $i\in \{0,1,\ldots,m\}$ such that $\mu_{k}(G)=\mu_k(H)$ for all $k\in\{0,1,\ldots,i-1\}$ and $\mu_i(G)<\mu_i(H)$, then there exists some $\delta>0$ such that $R_G(\rho)>R_H(\rho)$ for every $\rho \in (0,\delta)$.
    \item\label{it2:optlocal} If there exists $j\in \{0,1,\ldots,m\}$ such that $\mu_k(G)=\mu_k(H)$ for all $k\in\{j+1,j+2,\ldots,m\}$ and $\mu_j(G)<\mu_j(H)$, then 
    there exists some $\delta>0$ such that $R_G(\rho)>R_H(\rho)$ for every $\rho \in (1-\delta,1)$.
\end{enumerate}
\end{theorem}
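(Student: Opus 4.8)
The plan is to study the difference $D(\rho) = R_G(\rho) - R_H(\rho)$ directly through the explicit polynomial expression for reliability recalled just above the statement. Subtracting the two formulas, the constant terms $1$ cancel and one obtains
\[
D(\rho) = \sum_{k=0}^{m} \big(\mu_k(H) - \mu_k(G)\big)\,\rho^k (1-\rho)^{m-k}.
\]
Writing $d_k = \mu_k(H) - \mu_k(G)$, everything reduces to showing that $D(\rho) > 0$ on a right-neighbourhood of $0$ under the hypothesis of part~\ref{it1:optlocal}, and on a left-neighbourhood of $1$ under the hypothesis of part~\ref{it2:optlocal}.

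For part~\ref{it1:optlocal}, the hypothesis says $d_k = 0$ for $k < i$ and $d_i > 0$. First I would factor out the lowest surviving power of $\rho$, namely $\rho^i$, to write
\[
D(\rho) = \rho^i \Big[\, d_i (1-\rho)^{m-i} + \sum_{k=i+1}^{m} d_k\,\rho^{k-i}(1-\rho)^{m-k} \Big].
\]
As $\rho \to 0^+$, every term in the sum carries a factor $\rho^{k-i}$ with $k - i \geq 1$ and so vanishes, while $(1-\rho)^{m-i} \to 1$; hence the bracket tends to $d_i > 0$. By continuity the bracket stays positive on some interval $(0,\delta)$, and since $\rho^i > 0$ there, $D(\rho) > 0$, which is exactly $R_G(\rho) > R_H(\rho)$.

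Part~\ref{it2:optlocal} is handled symmetrically by the substitution $\sigma = 1 - \rho$, which swaps the roles of $\rho$ and $1-\rho$ and turns the behaviour near $\rho = 1$ into behaviour near $\sigma = 0$. Now the hypothesis $d_k = 0$ for $k > j$ and $d_j > 0$ says the lowest surviving power of $\sigma$ is $\sigma^{m-j}$; factoring it out and letting $\sigma \to 0^+$ leaves a bracket tending to $d_j > 0$, and the same continuity argument gives $D(\rho) > 0$ on a left-neighbourhood of $1$.

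I do not expect any serious obstacle: the whole argument is the elementary observation that the sign of a polynomial near an endpoint is governed by its lowest-order term there. The only point requiring care is the bookkeeping of which coefficient is the \emph{first} nonzero difference at each endpoint---the smallest index $i$ near $\rho = 0$ versus the largest index $j$ near $\rho = 1$---and making sure the residual sum genuinely vanishes in the limit so that the continuity argument yields a true open interval rather than merely a one-sided limit.
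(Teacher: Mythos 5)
Your proof is correct, and it is precisely the elementary-calculus argument the paper alludes to without writing out: the paper itself gives no proof, citing Brown and Cox and remarking only that the result ``can be proved using elementary calculus.'' Your factoring of the lowest surviving power of $\rho$ near $0$ (and of $1-\rho$ near $1$, via $\sigma=1-\rho$) in the difference $\sum_{k}(\mu_k(H)-\mu_k(G))\rho^k(1-\rho)^{m-k}$, followed by continuity of the remaining bracket, is complete and handles both parts correctly.
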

Consider any nonempty class $\mathcal{C}_{n,m}$ and any graph $G$ in $\mathcal{C}_{n,m}$. 
If $k>m-n+1$ then, clearly, $\mu_k(G)=\binom{m}{k}$. Let $t(G)$ be the number of spanning trees of $G$. Observe that $\mu_{m-n+1}(G)=\binom{m}{m-n+1}-t(G)$.  
\begin{definition}
A graph $H$ in $\mathcal{C}_{n,m}$ is \emph{$t$-optimal} if $\mu_{m-n+1}(H)\leq \mu_{m-n+1}(G)$ for each $G$ in $\mathcal{C}_{n,m}$.    
\end{definition}
The following remark is essential for our proof strategy of nonexistence. It is just an application of Theorem~\ref{theorem:optlocal}\ref{it2:optlocal} using $j=m-n+1$.
\begin{remark}\label{remark:toptimality}
Each uniformly most reliable graph in $\mathcal{C}_{n,m}$ is $t$-optimal.    
\end{remark}

Now, consider any nonempty class $\mathcal{C}_{n,m}$. Sort all graphs $G$ in $\mathcal{C}_{n,m}$ using the lexicographic order among all tuples $(\mu_0(G),\mu_1(G),\ldots,\mu_m(G))$. Define $\mathcal{C}_{n,m}^{0}$ as $\mathcal{C}_{n,m}$ and $\mathcal{C}_{n,m}^{i+1}=\{G:G\in \mathcal{C}_{n,m}^{i}, \, \forall H\in \mathcal{C}_{n,m}^{i}, \,\, \mu_{i+1}(G)\leq \mu_{i+1}(H)\}$ for each $i\in \{0,1,\ldots,m-1\}$. As $\mathcal{C}_{n,m}$ is finite and nonempty, each set  $\mathcal{C}_{n,m}^{i}$ is finite and nonempty as well. Our proof strategy is  summarized in the following lemma.
\begin{lemma}\label{lemma:strategy}
If $\mathcal{C}_{n,m}$ is nonempty, $\mathcal{C}_{n,m}^{j}=\{G\}$ for some positive integer $j$ and $G$ is not $t$-optimal, then there is no UMRG in $\mathcal{C}_{n,m}$.
\end{lemma}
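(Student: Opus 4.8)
The plan is to argue by contradiction: I would assume that some UMRG $G^*$ exists in $\mathcal{C}_{n,m}$ and deduce that $G^*$ must coincide with the graph $G$ of the hypothesis. This is impossible, because $G$ is assumed not to be $t$-optimal, whereas every UMRG is $t$-optimal by Remark~\ref{remark:toptimality}.

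The first step is a bookkeeping observation about the nested sets. I would prove by induction on $i$ that all graphs belonging to $\mathcal{C}_{n,m}^{i}$ share the same values $\mu_1,\ldots,\mu_i$. Indeed, passing from $\mathcal{C}_{n,m}^{i}$ to $\mathcal{C}_{n,m}^{i+1}$ retains exactly those graphs attaining the common minimum of $\mu_{i+1}$ over $\mathcal{C}_{n,m}^{i}$, so one further coordinate is pinned down at each stage. Since $\mu_0(H)=0$ for every connected graph $H$ (removing no edge cannot disconnect a connected graph), any two graphs in $\mathcal{C}_{n,m}^{i}$ agree on $\mu_0,\mu_1,\ldots,\mu_i$.

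The crucial step is then to show that every UMRG lies in $\mathcal{C}_{n,m}^{i}$ for all $i$, again by induction on $i$. The base case is immediate because $\mathcal{C}_{n,m}^{0}=\mathcal{C}_{n,m}$. For the inductive step, suppose $G^*\in\mathcal{C}_{n,m}^{i}$ but $G^*\notin\mathcal{C}_{n,m}^{i+1}$; then there is some $H\in\mathcal{C}_{n,m}^{i}$ with $\mu_{i+1}(H)<\mu_{i+1}(G^*)$. By the previous paragraph $\mu_k(G^*)=\mu_k(H)$ for every $k\leq i$, so applying Theorem~\ref{theorem:optlocal}\ref{it1:optlocal} with $H$ in the role of the smaller graph and index $i+1$ yields some $\delta>0$ with $R_H(\rho)>R_{G^*}(\rho)$ for all $\rho\in(0,\delta)$, contradicting that $G^*$ is a UMRG. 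Hence $G^*\in\mathcal{C}_{n,m}^{i+1}$, completing the induction.

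Taking $i=j$ forces $G^*\in\mathcal{C}_{n,m}^{j}=\{G\}$, so $G^*=G$; but $G$ is not $t$-optimal, contradicting Remark~\ref{remark:toptimality}, and the claim follows. I expect the only (very mild) obstacle to be stating the agreement-on-coordinates observation precisely, since it is exactly what licenses the equalities $\mu_k(G^*)=\mu_k(H)$ for $k\le i$ required to invoke Theorem~\ref{theorem:optlocal}\ref{it1:optlocal}; everything else is a routine contradiction argument driven by the behaviour of $R_G(\rho)$ near $\rho=0$.
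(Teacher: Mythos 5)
Your proposal is correct and takes essentially the same approach as the paper: both arguments rest on exactly the two same ingredients, namely Remark~\ref{remark:toptimality} to exclude $G$ itself and Theorem~\ref{theorem:optlocal}\ref{it1:optlocal} applied at the first index where two edge-cut vectors $(\mu_0,\mu_1,\ldots)$ differ. Your contradiction-plus-induction framing (``any UMRG survives every stage of the filtration $\mathcal{C}_{n,m}^{0}\supseteq\mathcal{C}_{n,m}^{1}\supseteq\cdots$, hence equals $G$'') is merely a contrapositive repackaging of the paper's direct argument that $G$, being the lexicographic minimizer, beats every other $H\in\mathcal{C}_{n,m}$ on some interval $(0,\delta)$, with your explicit agreement-on-coordinates observation spelling out what the paper asserts in one line.
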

\begin{proof}
As $G$ is not $t$-optimal, by Remark~\ref{remark:toptimality} we know that $G$ is not UMRG. 
Let $H$ be any other graph in $\mathcal{C}_{n,m}$. 
As $\mathcal{C}_{n,m}^{j}=\{G\}$, there exists some integer $i\in \{0,1,\ldots,j-1\}$ such that $\mu_k(G)=\mu_k(H)$ for all $k\in\{0,1,\ldots,i-1\}$ but $\mu_i(G)<\mu_i(H)$. By Theorem~\ref{theorem:optlocal}\ref{it1:optlocal}, there exists some $\delta>0$ such that $R_{G}(\rho)>R_{H}(\rho)$ for every $\rho\in (0,\delta)$. Consequently, $H$ is not UMRG. 
The lemma follows. \qed
\end{proof}

In Section~\ref{section:main} we will find the only graph $G_n$ in $\mathcal{C}_{n,n+4}$ such that $\mathcal{C}_{n,n+4}^{4}=\{G_n\}$. Then we will 
find a graph $H_n$ in each set $\mathcal{C}_{n,n+4}$ such that $\mu_{5}(H_n)<\mu_5(G_n)$ for all $n\geq n_0$. The main theorem will then follow from Lemma~\ref{lemma:strategy}. 

\section{Background}\label{section:basics}
In this section we include some concepts that already appeared in~\cite{2023-RomeroSafe} as well as some preliminary results that will be useful for our purpose.

All graphs in this work are finite and undirected. We denote the path, the cycle, and the complete graph on $n$ vertices by $P_n$, $C_n$, and $K_n$, respectively. The Wagner graph $W$ and the cube graph $Q$ are depicted in Figure~\ref{figure:cubicosB}. For each positive integer $p$ such that $p\geq 2$, the M\"obius graph, denoted $M_p$, consists of $C_{2p}$ plus $p$ edges each one joining opposite vertices in $C_{2p}$. 

The \emph{edge-connectivity} of a graph $G\neq K_1$, denoted  $\lambda{(G)}$, is the minimum $k$ such that $G$ has a $k$-edge-cut. Let $F$ be an edge-cut of $G$. We say $F$ \emph{separates} a set $S$ of vertices of $G$ if $F$ contains all edges with precisely one endpoint in $S$ and no edge with both endpoints in $S$. Notice that $F$ may also contain some edges with no endpoints in $S$. If $S=\{v\}$ for some vertex $v$ of $G$, we say that $F$ is \emph{vertex-separating}. If $S=\{u,v\}$ where $u$ and $v$ are the endpoints of an edge $e$ of $G$, we say that $F$ is \emph{edge-separating}. An edge-cut is \emph{nontrivial} if it is neither vertex-separating nor edge-separating. If $S$ induces a graph $H$ in $G$, we say $F$ is \emph{$H$-separating}.

Let $G$ be a graph with no loops. An edge $e$ is \emph{incident} to a vertex $v$ if $v$ is an endpoint of $e$. The \emph{degree} $d_G(v)$ of a vertex $v$ of $G$ is the number of edges incident to it. We say $G$ is \emph{cubic} if all its vertices have degree $3$. Two edges are \emph{nonincident}, \emph{incident}, or \emph{parallel} if they share precisely 0, 1, or 2 endpoints, respectively. By \emph{subdividing $k$ times} an edge with endpoints $x$ and $y$, we mean replacing the edge $xy$ by $k+1$ edges $xv_1,v_1v_2,\ldots,v_{k-1}v_k, v_ky$, where $v_1,v_2,\ldots,v_k$ are $k$ new vertices of degree $2$ each.

A simple graph $G$ is \emph{2-connected} if it has at least $3$ vertices, it is connected, and $G-v$ is connected for all $v$ in $V(G)$. Let $G$ be a $2$-connected simple graph having more edges than vertices. A \emph{chain $\gamma$ of $G$} is the edge set of a path $P$ in $G$, where all internal vertices of $P$ (if any) have degree $2$ in $G$ and $P$ has two distinct endpoints of degree greater than $2$ in $G$ each. The \emph{endpoints} of $\gamma$ are those of $P$ and $\gamma$ is \emph{incident} to a vertex $v$ if $v$ is one of its endpoints. The \emph{internal vertices} of $\gamma$ are those of $P$. By \emph{removing} $\gamma$ from $G$, we mean removing the edges and internal vertices of $\gamma$ (but not its endpoints). The graph that results by removing $\gamma$ from $G$ is denoted $G\ominus\gamma$. If $\mathcal{H}$ is a set of chains of $G$, we denote $G\ominus \mathcal{H}$ the graph that arises from $G$ by removing all the chains in $\mathcal{H}$. By \emph{collapsing $\gamma$} we mean removing $\gamma$ and adding an edge with the same endpoints as $\gamma$. We denote $\Gamma(G)$ the set of all chains of $G$. The \emph{distillation of $G$}, denoted $D(G)$, is the graph that arises from $G$ by collapsing all of its chains. Clearly, $G$ arises from $D(G)$ by a sequence of (possibly zero) subdivisions. 

Let $G$ be in $\mathcal{C}_{n,m}$. If $k\in\{0,1,\ldots,m\}$, we say $G$ is \emph{min-$\mu_k$} if $\mu_k(G) \leq \mu_k(H)$ for every $H$ in $\mathcal{C}_{n,m}$. Theorem~\ref{theorem:strong} restricts our study to $2$-connected graphs. 
\begin{theorem}[Wang \cite{1994-Wang}]\label{theorem:strong}
Let $G$ be a simple graph on $n$ vertices and $m$ edges such that $m>n$. If $G$ is min-$\mu_k$ for some $k\in \{\lambda(G),\lambda(G)+1,\ldots,m-n+1\}$, then $G$ is 2-connected.
\end{theorem}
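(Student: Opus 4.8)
The plan is to recast the problem in terms of connected spanning subgraphs and then argue by contradiction. Removing a $k$-subset $F$ of edges disconnects $G$ precisely when $F$ does not leave a connected spanning subgraph, so $\mu_k(G)=\binom{m}{k}-N_{m-k}(G)$, where $N_j(G)$ is the number of connected spanning subgraphs of $G$ with exactly $j$ edges. Since $\binom{m}{k}$ is constant on $\mathcal{C}_{n,m}$, being min-$\mu_k$ is equivalent to maximising $N_j$ over $\mathcal{C}_{n,m}$ with $j=m-k$, and the hypothesis $\lambda(G)\le k\le m-n+1$ becomes $n-1\le j\le m-\lambda(G)$. Suppose $G$ is connected but not $2$-connected; as $m>n$ forces $n\ge 3$, it has a cut vertex $v$. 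Write $V(G)=A\cup B$ with $A\cap B=\{v\}$, both sides of size at least $2$, and no edge between $A\setminus\{v\}$ and $B\setminus\{v\}$; put $G_A=G[A]$ and $G_B=G[B]$. A connected spanning subgraph of a one-point union is the union of connected spanning subgraphs of the two parts, so the generating polynomials factor, $\sum_j N_j(G)x^j=\big(\sum_i N_i(G_A)x^i\big)\big(\sum_i N_i(G_B)x^i\big)$; this product structure is the inefficiency I intend to beat.

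The main step, assuming first that $\lambda(G)\ge 2$ (so no block is a bridge and every end-block is $2$-connected), is a single edge relocation. Choose an end-block $B$, a neighbour $w$ of $v$ inside $B$, and a vertex $u\in A\setminus\{v\}$, and set $G^\ast=G-vw+uw$. Since there is no edge between $A\setminus\{v\}$ and $B\setminus\{v\}$, the edge $uw$ is new, so $G^\ast\in\mathcal{C}_{n,m}$ is simple, and it is connected because $B-vw$ remains connected. Writing $G_0=G-vw=G^\ast-uw$, the subgraphs avoiding the moved edge contribute equally to $N_j(G)$ and $N_j(G^\ast)$, so the comparison reduces to counting $(j-1)$-edge subgraphs $F\subseteq G_0$ for which $F+uw$, respectively $F+vw$, is connected spanning. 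Decomposing $F=F_A\sqcup F_B$ along the two sides, one checks that $F+vw$ connected spanning forces $(A,F_A)$ connected; routing $w$ into the connected $A$-side through $u$ then shows that every $F$ good for $vw$ is good for $uw$, giving $N_j(G^\ast)\ge N_j(G)$. For strictness I would exhibit an $F$ whose $A$-part separates $u$ from $v$ in $G_A$ while its $B$-part is connected spanning on $B$: then $F+uw$ is connected spanning but $F+vw$ is not, because $v$ and $w$ already lie in a common component. Such an $F$ needs at least $n-2$ edges, which is where $j\ge n-1$ (that is, $k\le m-n+1$) enters; and to realise the prescribed value $j-1$ up to the top of the range I would pick the end-block $B$ so that a fixed global minimum edge cut of $G$ lies on the $A$-side, which yields a vertex $u$ with $\lambda_{G_A}(u,v)\le\lambda(G)$ and hence enough edges, using $j\le m-\lambda(G)$ (that is, $k\ge\lambda(G)$). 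Then $N_j(G^\ast)>N_j(G)$ contradicts maximality.

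It remains to treat $\lambda(G)=1$, where some end-block is a bridge and the relocation above need not stay connected. Here I would use the pendant vertex to subdivide a cycle edge: if $w$ is a leaf at $v$ and $e=xy$ is a non-bridge edge of $H=G-w$, set $G^\ast=(G-vw-xy)+wx+wy$, the subdivision of $e$ in $H$ by $w$. A deletion–contraction count of connected spanning subgraphs gives $N_j(G^\ast)=N_j(G)+\big(\text{connected spanning subgraphs of }H-e\text{ with }j-1\text{ edges}\big)$, and since $e$ is not a bridge of $H$ this correction is positive for all $n-1\le j\le m-1$, i.e.\ for the whole range $1=\lambda(G)\le k\le m-n+1$. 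Once more $N_j(G^\ast)>N_j(G)$, a contradiction, completing the proof.

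The main obstacle is not the two operations but making the strictness \emph{uniform} over the entire interval $\lambda(G)\le k\le m-n+1$: one must design the move and the witnessing subgraph so that the injection on connected spanning subgraphs is strict at the prescribed edge count $j-1$, and verify that this count is attainable. The two endpoints are exactly what the hypotheses provide — $k\le m-n+1$ supplies enough edges for a spanning witness, while $k\ge\lambda(G)$, together with the freedom to choose among the at least two end-blocks and place the chosen one away from a global minimum cut, supplies the upper room. The bridge case, where the naive relocation fails, is the other delicate point, and it is precisely the clean deletion–contraction identity for the subdivision move that resolves it.
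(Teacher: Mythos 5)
First, a remark on the comparison: the paper does not actually prove Theorem~\ref{theorem:strong} --- it is quoted from Wang \cite{1994-Wang} with no argument given --- so your proposal can only be judged on its own merits. On those merits, the architecture is sound and most of the details check out: the translation $\mu_k(G)=\binom{m}{k}-N_{m-k}(G)$; the one-point-union factorization at a cut vertex; the relocation $G^\ast=G-vw+uw$ together with the injection (if $F+vw$ is connected spanning then so is $F+uw$, via the decomposition $F=F_A\sqcup F_B$) and the strict witnesses ($F_A$ spanning $A$ with exactly two components separating $u$ from $v$, $F_B$ connected spanning on $B$); the size bookkeeping, since such witnesses exist at every size from $n-2$ up to $m-1-\lambda_{G_A}(u,v)$, which matches $k\le m-n+1$ at the bottom and, via choosing the end-block so that a global minimum cut lies on the $A$-side (such a choice exists because any minimal cut sits on one side of each cut vertex and there are at least two end-blocks), matches $k\ge\lambda(G)$ at the top; and the deletion--contraction identity $N_j(G^\ast)=N_j(G)+N_{j-1}(H-e)$ for the leaf-subdivision move, which is correct and positive exactly on the range $n-1\le j\le m-1$.

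The genuine gap is the case split. You dispatch $\lambda(G)\ge 2$ with the relocation and then assert that $\lambda(G)=1$ means ``some end-block is a bridge,'' giving a pendant vertex $w$ to subdivide with. That is false: take two triangles joined by a single bridge ($n=6$, $m=7>n$, $\lambda(G)=1$, so $k\in\{1,2\}$ is admissible). Every end-block is a triangle, the minimum degree is $2$, and there is no leaf, so your second branch has nothing to work with, while your first branch was explicitly restricted to $\lambda(G)\ge 2$. The repair stays inside your own toolkit, but the dichotomy must be drawn at the block structure rather than at $\lambda$: if some end-block $B$ is $2$-connected, the relocation goes through verbatim even when $\lambda(G)=1$, because one may take the global minimum cut to be a bridge, which lies outside every $2$-connected block, so the chosen $B$ satisfies both requirements ($B-vw$ stays connected, and the cut lies in $E(G_A)$, supplying $u$ with $\lambda_{G_A}(u,v)\le\lambda(G)$); if instead every end-block is a $K_2$, its non-cut endpoint is a leaf and your subdivision argument applies. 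Note that this reorganization is forced, not cosmetic: the strictness of the relocation genuinely requires $B$ to be $2$-connected, since for $B=K_2$ the witness family is empty ($F_B\subseteq E(G_B)\setminus\{vw\}=\emptyset$ cannot be connected spanning on the two vertices of $B$), and indeed merely moving a leaf need not change the counts in the right direction.
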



If $c$ is a positive integer, $[c]$ denotes the set $\{1,2,\ldots,c\}$. If $k$ is a nonnegative integer, then the family of all the subsets of a set $S$ with cardinality $k$ is denoted $\binom{S}{k}$. Let $G$ be a $2$-connected simple graph having more edges than vertices. Let $\Gamma(G)$ be the set of all chains of $G$. Moreover, if $k$ is a nonnegative integer, let $\Gamma^{(k)}(G)$ be the family of all subsets of $\Gamma(G)$ of size $k$; i.e., $\Gamma^{(k)}(G)=\binom{\Gamma(G)}k$. We also let
\[ \Gamma^{(k)}_-(G)=\{\mathcal{H}\in\Gamma^{(k)}(G):\,G\ominus \mathcal{H}\text{ is disconnected}\}. \]

The following lemma gives a closed form for the number of $k$-edge-cuts of any $2$-connected simple graph having more edges than vertices.

\begin{lemma}[\cite{2023-RomeroSafe}]\label{lemma:objetivo} 
For each $2$-connected graph $G$ in $\mathcal{C}_{n,m}$ such that $m> n$ and each $k\in \{0,1,\ldots,m\}$,
\begin{equation}\label{eq:mk}
  \mu_k(G)=\binom mk-\sum_{\mathcal{H}\in\Gamma^{(k)}(G)}\prod_{\gamma\in \mathcal{H}}\ell(\gamma)+\sum_{\mathcal{H}\in\Gamma^{(k)}_-(G)}\prod_{\gamma\in \mathcal{H}}\ell(\gamma).
\end{equation}
\end{lemma}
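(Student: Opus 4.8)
The plan is to count, instead of the $k$-edge-cuts directly, their complement within all $k$-subsets of $E(G)$. Writing $\kappa_k(G)$ for the number of $k$-subsets $F\subseteq E(G)$ such that $G-F$ is connected, we have $\mu_k(G)=\binom mk-\kappa_k(G)$, so it suffices to show that $\kappa_k(G)=\sum_{\mathcal H\in\Gamma^{(k)}(G)}\prod_{\gamma\in\mathcal H}\ell(\gamma)-\sum_{\mathcal H\in\Gamma^{(k)}_-(G)}\prod_{\gamma\in\mathcal H}\ell(\gamma)$, where $\ell(\gamma)$ denotes the number of edges of the chain $\gamma$. The starting point is the observation that, since $G$ is $2$-connected and $m>n$ forces the existence of a vertex of degree at least $3$, the chains of $G$ partition $E(G)$: following any edge through its degree-$2$ internal vertices in both directions terminates at two vertices of degree at least $3$, and this assigns every edge to a unique chain.

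I would then isolate two structural facts. First, if $F$ contains two distinct edges of a single chain $\gamma$, then $G-F$ is disconnected, because the internal segment of $\gamma$ lying strictly between those two edges (a nonempty path of degree-$2$ vertices) is cut off from the rest of $G$; consequently every $F$ counted by $\kappa_k(G)$ meets each chain in at most one edge, and a $k$-subset of this type is exactly the choice of a set $\mathcal H$ of $k$ distinct chains together with one edge from each. Second, for such an $F$ with associated chain-set $\mathcal H$, the graph $G-F$ is connected if and only if $G\ominus\mathcal H$ is connected: deleting one interior edge of a chain $\gamma\in\mathcal H$ leaves two pendant paths dangling from the two (retained) endpoints of $\gamma$, and these pendant paths are irrelevant to connectivity, so severing one edge of each chain in $\mathcal H$ has the same effect on connectivity as deleting those chains outright.

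Granting these two facts, the count is immediate. For a fixed $\mathcal H\in\Gamma^{(k)}(G)$ with $G\ominus\mathcal H$ connected there are precisely $\prod_{\gamma\in\mathcal H}\ell(\gamma)$ transversals (one edge per chain), and each yields a distinct $F$ with $G-F$ connected; conversely every $F$ counted by $\kappa_k(G)$ arises from exactly one such pair. Hence $\kappa_k(G)=\sum_{\mathcal H}\prod_{\gamma\in\mathcal H}\ell(\gamma)$, the sum ranging over the $k$-subsets $\mathcal H$ of chains with $G\ominus\mathcal H$ connected. Writing $\Gamma^{(k)}(G)$ as the disjoint union of these connected $\mathcal H$ and the disconnected ones $\Gamma^{(k)}_-(G)$ turns this into the difference of the two sums above, and substituting into $\mu_k(G)=\binom mk-\kappa_k(G)$ gives \eqref{eq:mk}. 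The degenerate cases $k=0$ and $k$ exceeding the number of chains can be checked directly and agree with the formula.

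The main obstacle is the clean verification of the second structural fact, namely the connectivity equivalence between $G-F$ and $G\ominus\mathcal H$, since it is the step that silently converts an edge-deletion count into a chain-deletion count; once it is nailed down, together with the fact that the chains genuinely partition $E(G)$, the remainder is bookkeeping.
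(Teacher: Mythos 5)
Your proof is correct and follows the same route as the source: the paper states Lemma~\ref{lemma:objetivo} as imported from~\cite{2023-RomeroSafe} without reproducing a proof, and the argument there is precisely your complementary count --- chains partition $E(G)$, any connectivity-preserving $k$-subset meets each chain at most once and is thus a transversal of some $\mathcal H\in\Gamma^{(k)}(G)$, and $G-F$ is connected if and only if $G\ominus\mathcal H$ is, since the leftover chain segments are pendant paths. The only point worth making fully explicit is that $2$-connectedness (together with $m>n$) is what guarantees each maximal degree-$2$ path ends at two \emph{distinct} vertices of degree at least $3$, so that these objects really are chains in the paper's sense; your partition claim silently uses this.
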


If the graph $G$ in Lemma~\ref{lemma:objetivo} has precisely $t$ chains and the lengths of all its chains are $\ell_1,\ell_2,\ldots,\ell_t$, then the second term of the right-hand side of~\eqref{eq:mk} is
\begin{equation}\label{eq:aux} \sum_{\mathcal{H}\in\Gamma^{(k)}(G)}\prod_{\gamma\in \mathcal{H}}\ell(\gamma)=\sum_{J\in\binom{[t]}{k}}\prod_{i\in J}\ell_i. \end{equation}
If $\ell_1+\ell_2+\cdots+\ell_t$ equals to a fixed value $m$, the right-hand side of~\eqref{eq:aux} is maximized when the tuple $(\ell_1,\ell_2,\ldots,\ell_t)$ is fair as defined below. This maximality result is stated in Lemma~\ref{lemma:fairness}.

\begin{definition}[\cite{2023-RomeroSafe}]
A tuple $(x_1,x_2,\ldots,x_t)\in \mathbb{Z}_{+}^t$ is \emph{fair} if $|x_i-x_j|\leq 1$ for all $i,j \in \{1,2,\ldots,t\}$. A graph $G$ is \emph{fair} if it is a $2$-connected simple graph having more edges than vertices such that the tuple whose entries are the lengths of all the chains in $G$ is fair. 
\end{definition}

\begin{lemma}[\cite{2023-RomeroSafe}]\label{lemma:fairness} Let $k$ and $t$ be integers such that $2 \leq k\leq t$ and let
\[ \phi^{(k)}_{t}(\ell_1,\ell_2,\ldots,\ell_t)=\sum_{J\in\binom{[t]}{k}}\prod_{i\in J}\ell_i\qquad\text{for each }(\ell_1,\ell_2,\ldots,\ell_t)\in\mathbb Z_+^t. \]
Let $m$ be any integer such that $m \geq t$ and let $L_{t,m}=\{(\ell_1,\ell_2,\ldots,\ell_t)\in\mathbb Z_+^t:\ell_1+\ell_2+\cdots+\ell_t=m\}$. The following assertions hold.
\begin{enumerate}[label=(\roman*)]
\item The maximum of $\phi_t^{(k)}(\ell_1,\ell_2,\ldots,\ell_t)$ as $(\ell_1,\ell_2,\ldots,\ell_t)$ ranges over $L_{t,m}$ is attained precisely at those tuples that are fair. 
\item  Let $\Phi^{(k)}_{t}(m)$ be the maximum attained by $\phi_t^{(k)}(\ell_1,\ell_2,\ldots,\ell_t)$ in $L_{t,m}$. 

If $2\leq k<t$ then $\Phi^{(k)}_{t}(m)<\Phi^{(k)}_{t+1}(m)$.
\end{enumerate}
\end{lemma}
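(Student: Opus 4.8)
The plan is to treat $\phi^{(k)}_t$ as the $k$-th elementary symmetric polynomial $e_k$ in the variables $\ell_1,\ldots,\ell_t$ and to exploit the elementary decomposition obtained by peeling off one or two variables. The workhorse is the identity obtained by isolating two coordinates $x,y$ from the list and writing $e'_j$ for the $j$-th elementary symmetric polynomial of the remaining $t-2$ variables,
\[ e_k = xy\,e'_{k-2} + (x+y)\,e'_{k-1} + e'_k, \]
together with its one-variable analogue $e_k(\ell_1,\ldots,\ell_s) = e_k(\text{rest}) + \ell_s\,e_{k-1}(\text{rest})$. As I will explain, both assertions reduce to the observation that, under the stated hypotheses, the relevant coefficient $e'_{k-2}$ (respectively $e''_{k-2}$) is strictly positive rather than zero.

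For part (i), I would first note that $L_{t,m}$ is finite and nonempty (as $m\ge t$), so the maximum is attained. The core is a smoothing argument: if a maximizer $(\ell_1,\ldots,\ell_t)$ is not fair, pick indices with $\ell_i\ge\ell_j+2$ and replace $(\ell_i,\ell_j)$ by $(\ell_i-1,\ell_j+1)$, which keeps the tuple in $L_{t,m}$ (entries stay positive since $\ell_i\ge 3$). In the displayed identity only the term $xy\,e'_{k-2}$ is affected, and $(\ell_i-1)(\ell_j+1)-\ell_i\ell_j=\ell_i-\ell_j-1\ge 1$, so the value strictly increases provided $e'_{k-2}>0$. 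This positivity holds because the remaining $t-2$ entries are positive integers and $0\le k-2\le t-2$, which is exactly where $2\le k$ and $k\le t$ are used. The contradiction forces every maximizer to be fair. Finally, all fair tuples in $L_{t,m}$ are permutations of the single multiset with $r=m-t\lfloor m/t\rfloor$ copies of $\lceil m/t\rceil$ and the rest equal to $\lfloor m/t\rfloor$; since $e_k$ is symmetric they share one value, which is therefore the common maximum.

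For part (ii), assuming $m\ge t+1$ so that $L_{t+1,m}$ is nonempty, I would start from any maximizer $(\ell_1,\ldots,\ell_t)$ of $\phi^{(k)}_t$ on $L_{t,m}$. Since $m>t$, some entry, say $\ell_t$, satisfies $\ell_t\ge 2$; split it as $\ell_t=a+b$ with $a,b\ge 1$, producing a tuple $(\ell_1,\ldots,\ell_{t-1},a,b)\in L_{t+1,m}$. Peeling off the pair $(a,b)$ and comparing with peeling off $\ell_t$, and writing $e''_j$ for the elementary symmetric polynomials of the common prefix $(\ell_1,\ldots,\ell_{t-1})$, a direct computation gives
\[ \phi^{(k)}_{t+1}(\ell_1,\ldots,\ell_{t-1},a,b) - \phi^{(k)}_t(\ell_1,\ldots,\ell_t) = ab\,e''_{k-2}, \]
because the $e''_k$ and the $(a+b)\,e''_{k-1}=\ell_t\,e''_{k-1}$ contributions coincide. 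As $ab\ge 1$ and $e''_{k-2}>0$ (here using $0\le k-2\le t-1$, which holds since $2\le k<t$), the right-hand side is strictly positive, whence $\Phi^{(k)}_t(m)<\phi^{(k)}_{t+1}(\ell_1,\ldots,\ell_{t-1},a,b)\le\Phi^{(k)}_{t+1}(m)$.

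The arguments are not deep; the only points demanding care are the bookkeeping that guarantees the coefficients $e'_{k-2}$ and $e''_{k-2}$ are strictly positive rather than zero—which is precisely what pins down the roles of the hypotheses $2\le k\le t$ in (i) and $2\le k<t$ in (ii)—and the implicit requirement $m\ge t+1$ in (ii) for $\Phi^{(k)}_{t+1}(m)$ to be defined at all. I would therefore regard the verification of these positivity and non-degeneracy conditions, rather than any single inequality, as the main obstacle to a fully rigorous write-up.
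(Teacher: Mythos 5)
Your proof is correct, and there is nothing in this paper to diverge from: Lemma~\ref{lemma:fairness} is stated without proof, imported from \cite{2023-RomeroSafe}, and your smoothing argument via the decomposition $e_k=xy\,e'_{k-2}+(x+y)\,e'_{k-1}+e'_k$ for part (i), together with the splitting step $\phi^{(k)}_{t+1}(\ell_1,\ldots,\ell_{t-1},a,b)-\phi^{(k)}_t(\ell_1,\ldots,\ell_t)=ab\,e''_{k-2}$ for part (ii), is exactly the standard exchange-style route taken in that source. Your flagging of the implicit requirement $m\geq t+1$ in part (ii) (so that $L_{t+1,m}\neq\emptyset$ and $\Phi^{(k)}_{t+1}(m)$ is defined) is a legitimate reading of a small imprecision in the statement, not a gap in your argument, and it is harmless in every application made in the paper.
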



\begin{definition}[\cite{2023-RomeroSafe}]\label{def:induced}
Let $G$ be a $2$-connected simple graph on more edges than vertices. Let $\{f_1,f_2,\ldots,f_k\}$ be a $k$-edge-cut of $D(G)$. For each $i \in \{1,2,\ldots,k\}$, let $\gamma_i$ be the chain of $G$ corresponding to the edge $f_i$ of $D(G)$. We say a $k$-edge-cut $\{e_1,e_2,\ldots,e_k\}$ of $G$ \emph{is induced by $\{f_1,f_2,\ldots,f_k\}$} if $e_i \in \gamma_i$ for each $i \in \{1,2,\ldots,k\}$. Moreover,
\begin{enumerate}[label=(\roman*)]
\item if $\{f_1,f_2,\ldots,f_k\}$ is vertex-separating, then the $k$-edge-cut $\{e_1,e_2,\ldots,e_k\}$ is called \emph{Type-V};
\item if $\{f_1,f_2,\ldots,f_k\}$ is edge-separating but not vertex-separating, then the $k$-edge-cut  
$\{e_1,e_2,\ldots,e_k\}$ is called \emph{Type-E};
\item if $\{f_1,f_2,\ldots,f_k\}$ is nontrivial, then the $k$-edge-cut $\{e_1,e_2,\ldots,e_k\}$ is called \emph{Type-N}.
\end{enumerate}
The number of Type-V, Type-E, and Type-N $k$-edges-cuts of $G$ is denoted $\mu_k^{\mathrm V}(G)$, $\mu_k^{\mathrm E}(G)$, and $\mu_k^{\mathrm N}(G)$, respectively. The total number of induced $k$-edge-cuts of $G$ is denoted $\mu_k^{\mathrm I}(G)$; i.e.,
\begin{equation*}
   \mu_k^{\mathrm I}(G)=\mu_k^{\mathrm V}(G)+\mu_k^{\mathrm E}(G)+\mu_k^{\mathrm N}(G).
\end{equation*}
\end{definition}

Notice that, by the definition of $\mu_k^{\mathrm I}(G)$, it coincides with the third term of the right-hand side of~\eqref{eq:mk}, i.e., 
\begin{equation}\label{eq:induced}
\mu_k^{\mathrm I}(G)=\sum_{\mathcal{H}\in\Gamma^{(k)}_-(G)}\prod_{\gamma\in \mathcal{H}}\ell(\gamma).   
\end{equation}
This fact combined with Lemma~\ref{lemma:fairness} leads to the following result.

\begin{proposition}[\cite{2023-RomeroSafe}]\label{prop:mu^I} Let $k$ and $t$ be positive integers such that $2\leq k\leq t$ and let $\mathcal S$ be a nonempty set of fair graphs on $n$ vertices and $m$ edges such that $m>n$ and having precisely $t$ chains. Then, as $G$ ranges over $\mathcal S$, the minimum of $\mu_k(G)$ is attained precisely in the same graphs $G$ where the minimum of $\mu_k^{\mathrm I}(G)$ is attained. \end{proposition}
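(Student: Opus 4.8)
The plan is to combine the three pieces of the closed form in Lemma~\ref{lemma:objetivo} and observe that, on the family $\mathcal S$, two of the three terms are constant. First I would fix $G\in\mathcal S$ and let $\ell_1,\ell_2,\ldots,\ell_t$ be the lengths of its $t$ chains. Since every edge of $G$ lies in exactly one chain, $\ell_1+\ell_2+\cdots+\ell_t=m$, so the tuple $(\ell_1,\ldots,\ell_t)$ lies in $L_{t,m}$. By~\eqref{eq:aux} the second term on the right-hand side of~\eqref{eq:mk} equals $\phi_t^{(k)}(\ell_1,\ldots,\ell_t)$, and by~\eqref{eq:induced} the third term is exactly $\mu_k^{\mathrm I}(G)$. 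Thus
\begin{equation*}
\mu_k(G)=\binom mk-\phi_t^{(k)}(\ell_1,\ell_2,\ldots,\ell_t)+\mu_k^{\mathrm I}(G).
\end{equation*}

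Next I would argue that the first two terms do not depend on the particular choice of $G\in\mathcal S$. The binomial $\binom mk$ is clearly the same for all graphs in $\mathcal S$, as they all share the same $m$. For the middle term, note that $G$ is fair, so $(\ell_1,\ldots,\ell_t)$ is a fair tuple in $L_{t,m}$; because $2\le k\le t$, Lemma~\ref{lemma:fairness}(i) applies and tells us that the maximum $\Phi_t^{(k)}(m)$ of $\phi_t^{(k)}$ over $L_{t,m}$ is attained precisely at the fair tuples. In particular every fair tuple in $L_{t,m}$ yields the same value $\Phi_t^{(k)}(m)$: writing $m=qt+r$ with $0\le r<t$, the only fair tuples are the permutations of the one having $r$ entries equal to $q+1$ and $t-r$ entries equal to $q$, and $\phi_t^{(k)}$ is symmetric in its arguments. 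Hence $\phi_t^{(k)}(\ell_1,\ldots,\ell_t)=\Phi_t^{(k)}(m)$ for every $G\in\mathcal S$, a constant independent of $G$.

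Putting these together gives, for all $G\in\mathcal S$,
\begin{equation*}
\mu_k(G)=\binom mk-\Phi_t^{(k)}(m)+\mu_k^{\mathrm I}(G),
\end{equation*}
so $\mu_k(G)$ and $\mu_k^{\mathrm I}(G)$ differ by an additive constant that does not depend on $G$. Consequently the maps $G\mapsto\mu_k(G)$ and $G\mapsto\mu_k^{\mathrm I}(G)$ attain their minima over $\mathcal S$ at exactly the same graphs, which is the assertion. I expect no serious obstacle here: the whole argument rests on the single observation that fairness freezes the middle term of~\eqref{eq:mk}, so that only the induced-cut term $\mu_k^{\mathrm I}$ can vary across $\mathcal S$. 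The one point that deserves a line of care is the claim that all fair tuples of a given length and sum give the same value of $\phi_t^{(k)}$, which I would justify by the uniqueness of the fair tuple up to permutation together with the symmetry of $\phi_t^{(k)}$.
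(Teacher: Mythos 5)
Your proposal is correct and follows essentially the same route the paper indicates for this cited result: decompose $\mu_k(G)$ via Lemma~\ref{lemma:objetivo} together with~\eqref{eq:aux} and~\eqref{eq:induced}, then use Lemma~\ref{lemma:fairness} to see that fairness freezes the middle term at the constant $\Phi_t^{(k)}(m)$ across all of $\mathcal S$, so that $\mu_k$ and $\mu_k^{\mathrm I}$ differ by an additive constant. Your extra remark that all fair tuples in $L_{t,m}$ agree up to permutation, combined with the symmetry of $\phi_t^{(k)}$, is exactly the right point of care and closes the argument.
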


\section{Minimization of $2$-edge-cuts and $3$-edge-cuts}\label{section:nodefair}
Let $p$ and $i$ be nonnegative integers such that $p\geq 2$. The purpose of this section is two-fold. First, we characterize the set $\mathcal{C}_{2p+i,3p+i}^{2}$ consisting of all min-$\mu_2$ graphs in $\mathcal{C}_{2p+i,3p+i}$. 
Then, we find the set $\mathcal{C}_{2p+i,3p+i}^{3}$ consisting of all graphs with minimum number of $3$-edge-cuts among all graphs in $\mathcal{C}_{2p+i,3p+i}^{2}$. 

The following lemma can be proved using handshaking.
\begin{lemma}\label{lemma:hand}
Let $p$ and $i$ be nonnegative integers such that $p\geq 2$. If $G$ is a 2-connected graph in 
$\mathcal{C}_{2p+i,3p+i}$ then all the following statements hold,
\begin{enumerate}[label=(\roman*)]
\item\label{item1} Its distillation $D(G)$ has at most $3p$ edges and $\delta(D(G))\geq 3$. 
\item\label{item2} If $D(G)$ has precisely $3p$ edges then $D(G)$ is cubic.
\item\label{item3} The graph $G$ has at most $3p$ chains. The equality holds if and only if $D(G)$ is cubic. 
\end{enumerate}
\end{lemma}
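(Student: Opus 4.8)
The plan is to reduce all three statements to a single application of handshaking to the distillation $D(G)$. The two preliminary facts I would establish about $D(G)$ are: (a) its minimum degree satisfies $\delta(D(G))\geq 3$, and (b) it has corank $p+1$, i.e., if $D(G)$ has $n'$ vertices and $m'$ edges, then $m'=n'+p$. Once these are in hand, everything else is a short computation.

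For fact (a), recall that the vertices of $D(G)$ are exactly the vertices of $G$ of degree greater than $2$, and that collapsing a chain leaves the degrees of its two (distinct) endpoints unchanged. Since $G$ is $2$-connected with $m=3p+i>2p+i=n$, its average degree exceeds $2$, so $G$ has at least one vertex of degree at least $3$; consequently every degree-$2$ vertex is an internal vertex of a unique chain, and $2$-connectivity rules out a chain whose two endpoints coincide. Hence each surviving vertex $v$ is an endpoint of $d_G(v)\geq 3$ distinct chains, giving $d_{D(G)}(v)=d_G(v)\geq 3$, so $\delta(D(G))\geq 3$. This is the only place where $2$-connectivity is genuinely used, and it is the step I would write out most carefully. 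For fact (b), collapsing a chain of length $\ell$ deletes $\ell-1$ edges and $\ell-1$ internal vertices, so the quantity (number of edges) minus (number of vertices) is invariant under distillation; since it equals $(3p+i)-(2p+i)=p$ for $G$, we obtain $m'-n'=p$.

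Handshaking then closes everything out. From $\delta(D(G))\geq 3$ we get $2m'=\sum_{v}d_{D(G)}(v)\geq 3n'$; substituting $n'=m'-p$ yields $2m'\geq 3m'-3p$, that is, $m'\leq 3p$. Together with the degree bound already proved, this is statement~\ref{item1}. The equality $m'=3p$ forces $2m'=3n'$, hence $d_{D(G)}(v)=3$ for every vertex $v$, so $D(G)$ is cubic; this gives statement~\ref{item2}, and conversely a cubic $D(G)$ satisfies $2m'=3n'$, i.e. $m'=3p$. Finally, collapsing gives a bijection between the chains of $G$ and the edges of $D(G)$ (every edge of $G$ lies in exactly one chain, and each chain contributes exactly one edge), so $G$ has precisely $m'$ chains; statement~\ref{item3} is then immediate from \ref{item1} and \ref{item2}.

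I do not expect a real obstacle here beyond the bookkeeping in fact (a): one must be sure that the surviving vertices of $D(G)$ are exactly the degree-$\geq 3$ vertices of $G$ with their degrees preserved, and that no chain degenerates into a loop, both of which follow from $2$-connectivity together with $m>n$. The handshaking inequality and its equality case then deliver all three items uniformly.
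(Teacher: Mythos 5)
Your proof is correct and follows exactly the route the paper indicates, namely handshaking applied to the distillation $D(G)$ together with the invariance of $m-n$ under collapsing chains (the paper itself only remarks that the lemma ``can be proved using handshaking'' and omits the details). Your careful treatment of fact (a) --- that $2$-connectivity and $m>n$ guarantee $\delta(D(G))\geq 3$, preserved degrees, and no degenerate loop chains --- is precisely the bookkeeping the paper leaves implicit, and the bijection between chains of $G$ and edges of $D(G)$ correctly delivers item (iii).
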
    

Lemma~\ref{lemma:min2} follows just finding a simultaneous minimization of each of the $3$ terms that appear on the right hand side of Equation~\eqref{eq:mk}. The first term does not depend on the choice of the $2$-connected graph $G$. By Lemma~\ref{lemma:fairness}, the second term is minimum precisely when $G$ is a fair graph and has the greatest number of chains (i.e., $D(G)$ is a cubic graph in $\mathcal{C}_{2p,3p}$), while the third term is minimum precisely when the set $\Gamma^{(2)}_-(G)$ is empty, or equivalently, when $\lambda(D(G))=3$. 

\begin{lemma} \label{lemma:min2}
Let $p$ and $i$ be nonnegative integers such that $p\geq 2$. Among all 2-connected graphs $G$ in $\mathcal{C}_{2p+i,3p+i}$, the number $\mu_2(G)$ is minimum if and only if $G$ is a fair graph whose distillation $D(G)$ is a cubic graph in $\mathcal{C}_{2p,3p}$ such that $\lambda(D(G))=3$.
\end{lemma}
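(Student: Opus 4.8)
The plan is to apply Lemma~\ref{lemma:objetivo} with $k=2$, which writes
$$\mu_2(G)=\binom m2-\sum_{\mathcal H\in\Gamma^{(2)}(G)}\prod_{\gamma\in\mathcal H}\ell(\gamma)+\mu_2^{\mathrm I}(G)$$
with $m=3p+i$, and to bound the three summands separately so that $\mu_2(G)$ is minimized exactly when each summand is individually optimal; the only genuine work is to check that these separate optima can be met simultaneously. The first summand $\binom m2$ is the same for every $2$-connected $G$ in $\mathcal{C}_{2p+i,3p+i}$, so it plays no role in the comparison.

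For the middle summand, observe that minimizing $\mu_2(G)$ forces the subtracted quantity $S_2(G)=\sum_{\mathcal H\in\Gamma^{(2)}(G)}\prod_{\gamma\in\mathcal H}\ell(\gamma)$ to be as large as possible. By~\eqref{eq:aux}, if $G$ has $t$ chains of lengths $\ell_1,\ldots,\ell_t$ then $S_2(G)=\phi^{(2)}_t(\ell_1,\ldots,\ell_t)$ with $\ell_1+\cdots+\ell_t=m$; moreover $\delta(D(G))\geq 3$ forces $t\geq 3$. Lemma~\ref{lemma:hand}\ref{item3} gives $t\leq 3p$, with equality precisely when $D(G)$ is a cubic graph in $\mathcal{C}_{2p,3p}$, and Lemma~\ref{lemma:fairness} shows both that $\phi^{(2)}_t$ is maximized over $L_{t,m}$ exactly at fair tuples and that its maximum $\Phi^{(2)}_t(m)$ strictly increases with $t$ on this range. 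Hence $S_2(G)\leq\Phi^{(2)}_{3p}(m)$, with equality if and only if $G$ has the maximal number $3p$ of chains (equivalently $D(G)$ is cubic in $\mathcal{C}_{2p,3p}$) and $G$ is fair.

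For the last summand, \eqref{eq:induced} gives $\mu_2^{\mathrm I}(G)=\sum_{\mathcal H\in\Gamma^{(2)}_-(G)}\prod_{\gamma\in\mathcal H}\ell(\gamma)\geq 0$, since every chain length is a positive integer, and the value is $0$ if and only if $\Gamma^{(2)}_-(G)=\emptyset$. Because $G$ is $2$-connected its distillation has no bridge, so $\Gamma^{(2)}_-(G)=\emptyset$ is equivalent to $D(G)$ having no $2$-edge-cut, i.e.\ to $\lambda(D(G))\geq 3$; and once $D(G)$ is cubic this is the same as $\lambda(D(G))=3$. Combining the three bounds yields $\mu_2(G)\geq\binom m2-\Phi^{(2)}_{3p}(m)$ for every $2$-connected $G$ in the class.

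It remains to see that this bound is attained, and here lies the only delicate point: the condition maximizing the middle summand ($D(G)$ cubic) and the condition zeroing the last summand ($\lambda(D(G))=3$) must be compatible. This is exactly where a witness is needed. Taking the M\"obius graph $M_p$ defined in Section~\ref{section:basics} as the distillation provides a cubic graph in $\mathcal{C}_{2p,3p}$ with $\lambda(M_p)=3$ for every $p\geq 2$; subdividing its $3p$ edges and distributing the $i$ new vertices as evenly as possible produces a fair graph $G$ in $\mathcal{C}_{2p+i,3p+i}$ realizing all three optima, so the lower bound is the true minimum. Consequently a $2$-connected graph attains $\mu_2(G)=\binom m2-\Phi^{(2)}_{3p}(m)$ if and only if it is fair, has cubic distillation in $\mathcal{C}_{2p,3p}$, and satisfies $\lambda(D(G))=3$, which is the claimed characterization. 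The main obstacle is thus not any single bound but their simultaneous attainability, resolved by the existence of $3$-edge-connected cubic graphs on $2p$ vertices.
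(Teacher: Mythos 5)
Your proposal is correct and follows essentially the same route as the paper: decompose $\mu_2$ via Lemma~\ref{lemma:objetivo}, note the first term is constant, maximize the subtracted term using Lemma~\ref{lemma:hand}\ref{item3} and both parts of Lemma~\ref{lemma:fairness} (fair tuple, maximal chain count $3p$, i.e.\ cubic distillation), and annihilate $\mu_2^{\mathrm I}$ via $\lambda(D(G))=3$. Your explicit attention to simultaneous attainability of the three optima, witnessed by fair subdivisions of the M\"obius graph $M_p$, is exactly what the paper supplies in Remark~\ref{remark:Mobius}, so the arguments match in substance.
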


The following concept will be used to characterize the set  $\mathcal{C}_{2p+i,3p+i}^{3}$ for each pair of nonnegative integers $p$ and $i$ such that $p\geq 2$.
\begin{definition}
A fair graph is \emph{vertex-fair} if the sum-length of the chains that are incident to any fixed vertex is a fair tuple.    
\end{definition}

\begin{remark}\label{remark:Mobius}
M\"obius graph $M_p$ has only vertex-trivial $3$-edge cuts thus 
$\mu_3(M_p)=2p$ and $M_p$ is a cubic min-$\mu_3$ graph in  $\mathcal{C}_{2p,3p}$. For each nonnegative integer $i$ there exists some vertex-fair graph $G_{2p+i}$ in $\mathcal{C}_{2p+i,3p+i}$ whose distillation is $M_p$.
\end{remark}

The following technical lemma will give us all Type-V $3$-edge-cuts for each graph $G$ in  $\mathcal{C}_{2p+i,3p+i}^{2}$.
\begin{lemma}\label{lemma:technical}
Let $p$ and $i$ be nonnegative integers such that $p\geq 2$. 
Let $r$ and $s$ be the only integers such that 
$r\in \{0,1,\ldots,3p-1\}$ and $3p+i=3ps+r$. For each $j\in \{0,\ldots,3\}$, define $q_j$ such that $q_j(s)=(s+1)^js^{3-j}$. Denote $S_{p,r} = \{(x_1,\ldots,x_{2p}) \in \{0,\ldots,3\}^{2p}: x_1+\ldots+x_{2p}=2r\}$ and let

\[ g_s(x_1,\ldots,x_{2p})= \sum_{j=1}^{2p}q_{x_j}(s) \qquad\text{for each }(x_1,\ldots,x_{2p})\in S_{p,r}. \]

Then, the inequalities $q_3(s)>q_2(s)>q_1(s)>q_0(s)$ hold and the function $g_s(x_1,\ldots,x_{2p})$ attains its minimum in $S_{p,r}$ precisely at those tuples  $(x_1,\ldots,x_{2p})$ in $S_{p,r}$ which are fair. 
\end{lemma}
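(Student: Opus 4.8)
The plan is to reduce both assertions to a single structural fact, namely that the map $j\mapsto q_j(s)$ is strictly increasing and strictly convex on $\{0,1,2,3\}$, and then to minimize the separable function $g_s$ by a standard smoothing (exchange) argument.

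First I would record that $s\geq 1$: since $r\in\{0,\ldots,3p-1\}$ and $3p+i=3ps+r$ with $i\geq 0$, the value $s=0$ would force $r=3p+i\geq 3p$, contradicting $r\leq 3p-1$. Writing $\theta=(s+1)/s=1+1/s$, one has $\theta>1$ and $q_j(s)=s^3\theta^j$. Thus $j\mapsto q_j(s)$ is a positive multiple of the exponential $\theta^j$, hence strictly increasing, which yields $q_3(s)>q_2(s)>q_1(s)>q_0(s)$ at once, and strictly convex, because the consecutive differences $q_{j+1}(s)-q_j(s)=s^3\theta^j(\theta-1)$ are strictly increasing in $j$.

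For the minimization I would exploit that $g_s$ is separable, each summand being the same strictly convex function of its argument. Given any tuple in $S_{p,r}$ that is not fair, I would choose indices $i,\ell$ with $x_i-x_\ell\geq 2$ and replace $(x_i,x_\ell)$ by $(x_i-1,x_\ell+1)$. Since $x_i\geq 2$ and $x_\ell\leq 1$, the new tuple still lies in $\{0,\ldots,3\}^{2p}$ and preserves the sum $2r$, so it remains in $S_{p,r}$; and the resulting change in $g_s$ equals $[q_{x_\ell+1}(s)-q_{x_\ell}(s)]-[q_{x_i}(s)-q_{x_i-1}(s)]$, which is strictly negative by strict convexity because $x_\ell<x_i-1$ makes the first consecutive difference at $x_\ell$ strictly smaller than the one at $x_i-1$. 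Hence no non-fair tuple can be a minimizer. Conversely, every fair tuple in $S_{p,r}$ has all entries in $\{a,a+1\}$ with $a=\lfloor r/p\rfloor$, and the number of entries equal to $a+1$ is forced to be $2r-2pa$ by the sum constraint; therefore all fair tuples are permutations of one another and share the same value of $g_s$. Combining the two directions shows that the minimum of $g_s$ over $S_{p,r}$ is attained precisely at the fair tuples.

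There is little genuine obstacle here beyond careful bookkeeping. The one point that requires attention is that the exchange step must respect the bound $x_j\in\{0,\ldots,3\}$, which is exactly why I verify $x_i\geq 2$ and $x_\ell\leq 1$ before shifting. I would also confirm that fair tuples actually exist in $S_{p,r}$: this follows from $0\leq 2r\leq 6p-2<6p=3\cdot 2p$ together with $a=\lfloor r/p\rfloor\in\{0,1,2\}$, so that $\{a,a+1\}\subseteq\{0,1,2,3\}$ and $2r-2pa\in\{0,\ldots,2p\}$.
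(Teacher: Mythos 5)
Your proposal is correct, and in fact the paper states Lemma~\ref{lemma:technical} without including any proof, so there is nothing to compare line by line; your argument supplies exactly the kind of reasoning the surrounding text presupposes. The key observation that $q_j(s)=s^3\theta^j$ with $\theta=1+1/s>1$, so that $j\mapsto q_j(s)$ is a geometric (hence strictly increasing and strictly convex) sequence, disposes of both assertions at once, and your smoothing step --- replacing $(x_i,x_\ell)$ with $x_i-x_\ell\geq 2$ by $(x_i-1,x_\ell+1)$ and noting the change $\bigl(q_{x_\ell+1}(s)-q_{x_\ell}(s)\bigr)-\bigl(q_{x_i}(s)-q_{x_i-1}(s)\bigr)<0$ --- is the standard exchange argument for minimizing a separable convex function under a sum constraint; it is the natural minimization counterpart of the fairness maximization in Lemma~\ref{lemma:fairness}. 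You correctly attend to the two points where such an argument can silently fail: the exchange stays inside the box $\{0,\ldots,3\}^{2p}$ because $x_i\geq 2$ and $x_\ell\leq 1$, and fair tuples actually exist in $S_{p,r}$ and form a single permutation class (so they share one common value of $g_s$), which is what upgrades ``no non-fair tuple is a minimizer'' to ``the minimizers are precisely the fair tuples.'' Your preliminary check that $s\geq 1$ is also needed and correctly derived from the uniqueness of the quotient--remainder pair. The only blemish is notational: you reuse $i$ both for the integer fixed in the statement and as an exchange index, which should be renamed, but this does not affect the mathematics.
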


Let $G$ be any graph in $\mathcal{C}_{2p+i,3p+i}^{2}$ and let $v_1,v_2,\ldots,v_{2p}$ be the $2p$ vertices that are endpoints of some chain in $G$. If we call $x_i$ to the number of incident chains of $v_i$ whose lengths are $s+1$, then $G$ has precisely 
$g_s(x_1,x_2,\ldots,x_{2p})$ Type-V $3$-edge-cuts. If $G$ has the least number of Type-V $3$-edge-cuts among all graphs in $\mathcal{C}_{2p+i,3p+i}^{2}$ and no Type-E neither Type-N $3$-edge-cuts, then $G$ will have the least value of $\mu_3^{\mathrm{I}}$ in $\mathcal{C}_{2p+i,3p+i}^{2}$. Lemma~\ref{lemma:min3} then follows from Proposition~\ref{prop:mu^I}.

\begin{lemma}\label{lemma:min3}
Let $p$ and $i$ be nonnegative integers such that $p\geq 2$. The number $\mu_3(G)$ is minimum among all graphs $G$ in $\mathcal{C}_{2p+i,3p+i}^{2}$ if and only if $G$ is vertex-fair and $D(G)$ is a cubic graph in $\mathcal{C}_{2p,3p}$ having only vertex-trivial $3$-edge-cuts.
\end{lemma}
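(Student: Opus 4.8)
The plan is to reduce the minimization of $\mu_3$ over $\mathcal{C}_{2p+i,3p+i}^{2}$ to the purely combinatorial minimization of Lemma~\ref{lemma:technical}, by passing from $\mu_3$ to the induced count $\mu_3^{\mathrm I}$ via Proposition~\ref{prop:mu^I} and then splitting $\mu_3^{\mathrm I}$ according to the cut types of Definition~\ref{def:induced}. First I would record the structure of the competitors: by Lemma~\ref{lemma:min2}, every $G\in\mathcal{C}_{2p+i,3p+i}^{2}$ is a fair $2$-connected graph whose distillation $D(G)$ is a cubic graph in $\mathcal{C}_{2p,3p}$ with $\lambda(D(G))=3$. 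In particular each such $G$ has exactly $3p$ chains (Lemma~\ref{lemma:hand}\ref{item3}) and $3p+i$ edges, so Proposition~\ref{prop:mu^I} applies with $k=3$, $t=3p$ and $\mathcal S=\mathcal{C}_{2p+i,3p+i}^{2}$. It therefore suffices to characterize the graphs in this class that minimize $\mu_3^{\mathrm I}$.

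Next I would split $\mu_3^{\mathrm I}=\mu_3^{\mathrm V}+\mu_3^{\mathrm E}+\mu_3^{\mathrm N}$ and treat each term separately. Since $D(G)$ is cubic and $\lambda(D(G))=3$, an edge-separating cut of $D(G)$ around an adjacent pair $\{u,v\}$ has size $d(u)+d(v)-2=4$ when $uv$ is a simple edge (and if $uv$ were a multi-edge the count would drop below $3$, contradicting $\lambda(D(G))=3$); hence $D(G)$ admits no edge-separating $3$-edge-cut and $\mu_3^{\mathrm E}(G)=0$. For the Type-V term, each of the $2p$ branch vertices $v_j$ of $D(G)$ produces vertex-separating $3$-edge-cuts by choosing one edge in each of its three incident chains, so if $x_j$ of those chains have length $s+1$ (and the other $3-x_j$ have length $s$), the number of such cuts at $v_j$ is $(s+1)^{x_j}s^{3-x_j}=q_{x_j}(s)$; thus $\mu_3^{\mathrm V}(G)=g_s(x_1,\ldots,x_{2p})$. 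As each chain has two endpoints, $\sum_j x_j=2r$, so $(x_1,\ldots,x_{2p})\in S_{p,r}$ and Lemma~\ref{lemma:technical} gives $\mu_3^{\mathrm V}(G)\ge\min_{S_{p,r}}g_s$, with equality exactly when $(x_1,\ldots,x_{2p})$ is fair; since the sum-length of the chains incident to $v_j$ equals $3s+x_j$, this is exactly the condition that $G$ be vertex-fair. Finally $\mu_3^{\mathrm N}(G)\ge 0$, with equality iff $D(G)$ has no nontrivial $3$-edge-cut.

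Combining the three estimates, $\mu_3^{\mathrm I}(G)=\mu_3^{\mathrm V}(G)+\mu_3^{\mathrm N}(G)\ge\min_{S_{p,r}}g_s$ for every $G$ in the class, and equality holds precisely when $G$ is vertex-fair and every $3$-edge-cut of $D(G)$ is vertex-trivial. Remark~\ref{remark:Mobius} guarantees this lower bound is actually attained, for instance by the vertex-fair graph $G_{2p+i}$ whose distillation is $M_p$, so it is indeed the minimum of $\mu_3^{\mathrm I}$ over $\mathcal{C}_{2p+i,3p+i}^{2}$; Proposition~\ref{prop:mu^I} then transfers the characterization from $\mu_3^{\mathrm I}$ back to $\mu_3$, which is exactly the assertion of the lemma.

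The only genuinely delicate point is the exact evaluation $\mu_3^{\mathrm V}(G)=g_s(x_1,\ldots,x_{2p})$. To justify it one must check that no induced $3$-edge-cut is counted under two different types and that the Type-V cuts arising from distinct branch vertices are distinct. Both follow from the cubic structure together with $p\ge2$: each induced $G$-cut determines the unique triple of chains it meets, hence the unique triple of $D(G)$-edges and its unique type; and two distinct branch vertices of a cubic graph with more than two vertices cannot share all three incident chains, so their vertex-separating cuts are never identified. Everything else is bookkeeping layered on top of the two cited optimization results, Lemma~\ref{lemma:technical} and Proposition~\ref{prop:mu^I}.
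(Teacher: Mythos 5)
Your proof is correct and takes essentially the same route as the paper: it counts the Type-V $3$-edge-cuts as $g_s(x_1,\ldots,x_{2p})$ and minimizes via Lemma~\ref{lemma:technical}, notes that Type-E and Type-N contributions vanish at the optimum (with attainability supplied by Remark~\ref{remark:Mobius}), and transfers the characterization from $\mu_3^{\mathrm I}$ back to $\mu_3$ through Proposition~\ref{prop:mu^I}. The only difference is that you spell out details the paper leaves implicit, such as the automatic vanishing of $\mu_3^{\mathrm E}$ for simple cubic distillations and the absence of double counting among induced cuts, which refines rather than alters the argument.
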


\section{Main Result}\label{section:main}
In this section we will prove that there are finitely many uniformly most reliable graphs of corank $5$. Let $n$ be any integer such that $n \geq 8$. First, we will find the only graph $G_n$ in $\mathcal{C}_{n,n+4}$ such that $\mathcal{C}_{n,n+4}^{4}=\{G_n\}$. Then we will construct, for each integer $n$ such that $n\geq 13$, a graph $H_n$ in $\mathcal{C}_{n,n+4}$. Finally, we will prove that there exists some integer $n_0$ such that $\mu_{5}(H_n)<\mu_5(G_n)$ for all $n\geq n_0$. The main theorem will then follow from Lemma~\ref{lemma:strategy}. 

Now, let us find $\mathcal{C}_{n,n+4}^{3}$. Choosing $p=4$ and $i=n-2p$ in Lemma~\ref{lemma:min3} gives that a graph $G$ belongs to  $\mathcal{C}_{n,n+4}^{3}$ if and only if $G$ is vertex-fair and 
$D(G)$ is a cubic graph in $\mathcal{C}_{8,12}$ having only vertex-trivial $3$-edge-cuts. Bussemake~\cite{Bussemake1977} shows that there are only $5$ nonisomorphic cubic graphs in $\mathcal{C}_{8,12}$. There are precisely $2$ such graphs having only vertex-trivial $3$-edge-cuts, namely, Wagner graph $W$ and the cube graph $Q$ which are depicted in Figure~\ref{figure:cubicosB}. 
We obtained the following result.
\begin{lemma}\label{lemma:QoW}
Let $n$ be an integer such that $n\geq 8$. A graph $G$ belongs to $\mathcal{C}_{n,n+4}^{3}$ if and only if $G$ is vertex-fair and $D(G) \in \{Q,W\}$.
\end{lemma}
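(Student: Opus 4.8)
The plan is to obtain the statement as a direct specialization of Lemma~\ref{lemma:min3}, followed by a finite classification of the cubic graphs on eight vertices. First I would apply Lemma~\ref{lemma:min3} with $p=4$ and $i=n-8$; since $n\geq 8$ we have $p=4\geq 2$ and $i\geq 0$, and $2p+i=n$ while $3p+i=n+4$, so that $\mathcal{C}_{n,n+4}=\mathcal{C}_{2p+i,3p+i}$. As $\mathcal{C}_{n,n+4}^{3}$ is by definition the set of graphs of $\mathcal{C}_{n,n+4}^{2}$ attaining the minimum of $\mu_3$, Lemma~\ref{lemma:min3} states precisely that $G\in\mathcal{C}_{n,n+4}^{3}$ if and only if $G$ is vertex-fair and $D(G)$ is a cubic graph in $\mathcal{C}_{8,12}$ all of whose $3$-edge-cuts are vertex-separating. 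Hence the whole statement reduces to proving that the cubic graphs in $\mathcal{C}_{8,12}$ with only vertex-trivial $3$-edge-cuts are exactly $Q$ and $W$.

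For this latter step I would trade the cut condition for the more tractable property of triangle-freeness. In a cubic graph $D$ the number of edges leaving a vertex set $S$ equals $3|S|-2e(S)$, where $e(S)$ is the number of edges inside $S$; in particular this boundary size has the same parity as $|S|$. Running this count over the sides of a $3$-edge-cut on eight vertices shows that a non-vertex-separating $3$-edge-cut forces a side of three vertices spanning three edges, i.e.\ an induced triangle (appearing on the three-vertex side directly, and on the five-vertex complement otherwise). The same parity count, applied to hypothetical $1$- and $2$-edge-cuts, shows that a triangle-free cubic graph on eight vertices has neither a bridge nor a $2$-edge-cut, hence is $3$-edge-connected; then every $3$-edge-cut is the edge-boundary of one side, and parity plus the absence of triangles pins the smaller side down to a single vertex. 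Combining the two directions, for a cubic graph on eight vertices the property of having only vertex-trivial $3$-edge-cuts is equivalent to being triangle-free, so the task reduces to listing the triangle-free connected cubic graphs on eight vertices.

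Finally I would invoke Bussemake's enumeration~\cite{Bussemake1977} of the five connected cubic graphs in $\mathcal{C}_{8,12}$ and inspect each for triangles. The cube $Q$ is bipartite and hence triangle-free, while the Wagner graph $W=M_4$ is triangle-free as well; indeed Remark~\ref{remark:Mobius} already records that each M\"obius graph $M_p$, and so $W$ in particular, has only vertex-trivial $3$-edge-cuts. For each of the three remaining graphs I would exhibit an induced triangle, whose three boundary edges then form a non-vertex-separating $3$-edge-cut and thereby disqualify it. This leaves exactly $Q$ and $W$, which together with the reduction of the first paragraph proves the lemma. I expect the main obstacle to be precisely this last, unavoidably finite verification: it rests on the external classification of the five cubic graphs on eight vertices and on a case-by-case search for triangles, the delicate point being to confirm that the cube and the Wagner graph are the only two triangle-free graphs among the five.
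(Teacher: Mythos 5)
Your proposal is correct and takes essentially the same route as the paper: specialize Lemma~\ref{lemma:min3} with $p=4$ and $i=n-8$ to reduce the statement to identifying the cubic graphs in $\mathcal{C}_{8,12}$ with only vertex-trivial $3$-edge-cuts, then settle that finite question against Bussemake's enumeration of the five connected cubic graphs on eight vertices. The paper simply asserts that $Q$ and $W$ are the only two such graphs, whereas you additionally justify this step with a sound parity-and-Tur\'an argument showing that, for connected cubic graphs on eight vertices, having only vertex-trivial $3$-edge-cuts is equivalent to triangle-freeness---a legitimate and welcome filling-in of a verification the paper leaves implicit.
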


The following concept will be useful to define the only graph $G_n$ such that $\mathcal{C}_{n,n+4}^4=\{G_n\}$ for each $n\geq 8$. 
\begin{definition}
Let $H$ be any $2$-connected simple cubic graph and let $Y$ be a set of edges of $H$. We denote $H\odot_s Y$ the graph that arises from $H$ by subdividing $s$ times each edge in $Y$ and $s-1$ times each edge not in $Y$. A \emph{vertex-fair subdivision of $H$} is any vertex-fair graph isomorphic to $H\odot_s Y$ for some $Y\subseteq E(H)$. 
\end{definition}

\begin{figure}
\begin{center}
\scalebox{0.62}{
\begin{tabular}{c}
\begin{tikzpicture}[ scale=1.2, nodo/.style={circle,draw=black!120,fill=white!120,inner sep=0pt,minimum size=5mm}]

\node[nodo] (1) at (0,2) {$1$};
\node[nodo] (2) at (1.4142,1.4142) {$2$};
\node[nodo] (3) at (2,0) {$3$};
\node[nodo] (4) at (1.4142,-1.4142) {$4$};
\node[nodo] (5) at (0,-2) {$5$};
\node[nodo] (6) at (-1.4142,-1.4142) {$6$};
\node[nodo] (7) at (-2,0) {$7$};
\node[nodo] (8) at (-1.4142,1.4142) {$8$};

\draw (1) to (2);
\draw (2) to (3);
\draw (3) to (4);
\draw (4) to (5);
\draw (5) to (6);
\draw (6) to (7);
\draw (7) to (8);
\draw (8) to (1);
\draw (1) to (5);
\draw (2) to (6);
\draw (3) to (7);
\draw (4) to (8);
\end{tikzpicture} 
\\
$W$
\end{tabular}
\begin{tabular}{c}
\begin{tikzpicture}[ scale=1.2, nodo/.style={circle,draw=black!120,fill=white!120,inner sep=0pt,minimum size=5mm}]

\node[nodo] (1) at (0,2) {$1$};
\node[nodo] (2) at (1.4142,1.4142) {$2$};
\node[nodo] (3) at (2,0) {$3$};
\node[nodo] (4) at (1.4142,-1.4142) {$4$};
\node[nodo] (5) at (0,-2) {$5$};
\node[nodo] (6) at (-1.4142,-1.4142) {$6$};
\node[nodo] (7) at (-2,0) {$7$};
\node[nodo] (8) at (-1.4142,1.4142) {$8$};

\draw (1) to (2);
\draw (2) to (3);
\draw (3) to (4);
\draw (4) to (5);
\draw (5) to (6);
\draw (6) to (7);
\draw (7) to (8);
\draw (8) to (1);
\draw (1) to (6);
\draw (2) to (5);
\draw (3) to (8);
\draw (4) to (7);
\end{tikzpicture} %
\\
$Q$
\end{tabular}
}
\caption{Wagner graph $W$ and the cube $Q$. \label{figure:cubicosB}}
\end{center}
\end{figure}
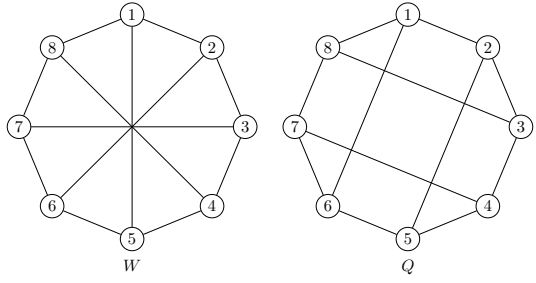

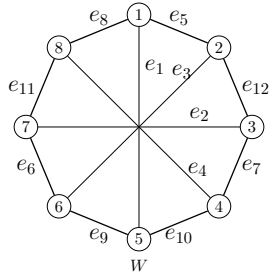
\begin{figure}
\begin{center}
\scalebox{0.62}{
\begin{tabular}{c}
\begin{tikzpicture}[ scale=1.2, nodo/.style={circle,draw=black!120,fill=white!120,inner sep=0pt,minimum size=5mm}]

\node[nodo] (1) at (0,2) {$1$};
\node[nodo] (2) at (1.4142,1.4142) {$2$};
\node[nodo] (3) at (2,0) {$3$};
\node[nodo] (4) at (1.4142,-1.4142) {$4$};
\node[nodo] (5) at (0,-2) {$5$};
\node[nodo] (6) at (-1.4142,-1.4142) {$6$};
\node[nodo] (7) at (-2,0) {$7$};
\node[nodo] (8) at (-1.4142,1.4142) {$8$};

\draw[right,pos=0.2] (1) edge node {\Large{$e_1$}}  (5); 
\draw[above,pos=0.2] (3) edge node {\Large{$e_2$}}  (7); 
\draw[above,pos=0.2] (2) edge node {\Large{$e_3$}}  (6); 
\draw[right,pos=0.2] (4) edge node {\Large{$e_4$}}  (8); 
\draw[thick,above] (1) edge node {\Large{$e_5$}}  (2); 
\draw[thick,left] (6) edge node {\Large{$e_6$}}  (7); 
\draw[thick,right] (3) edge node {\Large{$e_7$}}  (4); 
\draw[thick,above] (8) edge node {\Large{$e_8$}}  (1); 
\draw[thick,below] (5) edge node {\Large{$e_9$}}  (6); 
\draw[thick,below] (4) edge node {\Large{$e_{10}$}}  (5); 
\draw[thick,left] (7) edge node {\Large{$e_{11}$}}  (8); 
\draw[thick,right] (2) edge node {\Large{$e_{12}$}}  (3);
\end{tikzpicture}
\\
$W$
\end{tabular}
}
\end{center}
\caption{Edges $e_1,e_2,\ldots,e_{12}$ in the Wagner graph $W$. \label{figure:Wedges} 
} 
\end{figure}

\begin{definition}\label{definition:Gn}
Define, for each integer $n$ such that $n\geq 8$, 
the vertex-fair subdivision $G_n$ of $W$ as follows. 
Label the edges of $W$ as in Figure~\ref{figure:Wedges}. 
Let $r$ and $s$ be the unique integers such that $n+4=12s+r$ and $r\in \{0,\ldots,11\}$. 
If $r=0$, let $X_0=\emptyset$; if $r=8$, let $X_8=\{e_1,e_2,e_3,e_4,e_6,e_8,e_{10},e_{12}\}$; otherwise, $X_r=\{e_1,e_2,\ldots,e_r\}$.  
Then, $G_n$ is defined as $W\odot_s X_r$.     
\end{definition}

By Lemma~\ref{lemma:QoW}, each graph in $\mathcal{C}_{n,n+4}^3$ is either a vertex-fair subdivision of $Q$ or a vertex-fair subdivision of $W$. In order to determine $\mathcal{C}_{n,n+4}^4$ we will find closed forms for $\mu_4^{\mathrm V}(G)$, $\mu_4^{\mathrm E}(G)$, and $\mu_4^{\mathrm N}(G)$. The following definition will be considered, where the labels of the vertices and edges of $Q$ and $W$ are given in Figure~\ref{figure:cubicosB}. 

\begin{definition} 
Let $G$ be any graph in $\mathcal{C}_{n,n+4}^3$ such that $G=D(G)\odot_s X$. 
\begin{enumerate}
    \item[\textbullet] For each $i\in \{0,1,2,3\}$, we will denote $p_i(G)$ the number of vertices in $D(G)$ that are incident to precisely $i$ edges of $X$.
    \item[\textbullet] For each $j\in \{0,1,2,3,4\}$, we will denote $q_j(G)$ the number of edges in $D(G)$ that are incident to precisely $j$ edges of $X$.
    \item[\textbullet] If $k\in \{1,2,3\}$ and $D(G)=Q$, then $z_k(G)=|F_k\cap X|$, where $F_1 = \{23,45,67,81\}$, $F_2 = \{12,38,47,56\}$, and $F_3=\{16,25,34,78\}$. 
    \item[\textbullet] If $k\in \{1,2,3\}$ and $D(G)=W$, then $z_k(G)=|E_j\cap X|$, where 
$E_1= \{12,34,56,78\}$, $E_2= \{23,45,67,81\}$, and $E_3=\{15,26,37,48\}$. 
\end{enumerate}
\end{definition}

Using the definition of Type-V, Type-E and Type-N edge-cuts and the fact that each graph $G$ in $\mathcal{C}_{n,n+4}^3$ is either a vertex-fair subdivision of 
$Q$ or a vertex-fair subdivision of $W$, the following lemma can be derived.
\begin{lemma}\label{lemma:induced-4-cuts}
Let $n$ be an integer such that $n\geq 8$ and let $r$ and $s$ be the unique integers 
such that $r\in \{0,1,\ldots,11\}$ and $n+4=12s+r$. If $G \in \mathcal{C}_{n,n+4}^3$, then 
\begin{align}
\mu_4^{\mathrm V}(G) &= \sum_{i=0}^{3}p_i(G)(s+1)^{i}s^{3-i}(n+4-3s-i), \label{muV}\\
\mu_4^{\mathrm E}(G) &= \sum_{i=0}^{4}q_i(G)(s+1)^{i}s^{4-i}, \label{muE}\\
\mu_4^{\mathrm N}(G) &= \sum_{i=1}^{3}(s+1)^{z_i(G)}s^{4-z_i(G)}, \text{ when } D(G)=Q, \label{muN}\\
\mu_4^{\mathrm N}(G) &= \sum_{i=1}^{2}(s+1)^{z_i(G)}s^{4-z_i(G)}, \text{ when } D(G)=W. \label{muN2}
\end{align}
\end{lemma}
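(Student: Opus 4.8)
The plan is to compute $\mu_4^{\mathrm V}(G)$, $\mu_4^{\mathrm E}(G)$, and $\mu_4^{\mathrm N}(G)$ directly from Definition~\ref{def:induced}. By Lemma~\ref{lemma:QoW} we may write $G=H\odot_s X$ with $H=D(G)\in\{Q,W\}$, so every chain of $G$ has length $s+1$ when its edge lies in $X$ and length $s$ otherwise. By the very definition of an induced cut, each $4$-edge-cut $\{f_1,\ldots,f_4\}$ of $H$ produces exactly $\prod_{i=1}^4\ell(\gamma_i)$ induced $4$-edge-cuts of $G$ (one for each choice of an edge in each chain $\gamma_i$), and each such cut is of Type V, E, or N according to the type of $\{f_1,\ldots,f_4\}$ in $H$. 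Thus the whole task reduces to (a) enumerating the $4$-edge-cuts of the cubic graphs $Q$ and $W$ sorted by type, and (b) reading off from $X$, for each such cut, how many of its four edges have length $s+1$ rather than $s$.

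The first key step is a structural classification of the $4$-edge-cuts of a cubic simple graph on $8$ vertices of girth $4$ (both $Q$ and $W$ qualify). I would argue by the size of a smallest side $S$ of a bipartition with edge boundary $\partial S\subseteq F$ for a cut $F$ with $|F|=4$. A handshake count gives $|\partial S|=3|S|-2e(S)$ with $e(S)$ internal edges, and triangle-freeness bounds $e(S)$; this leaves only three possibilities: $|S|=1$, so $F$ consists of the three edges at a vertex $v$ together with one further edge $f$ (Type V); $|S|=2$ with the two vertices adjacent, so $F$ is exactly the four edges meeting an edge $uv$ and hence $F=\partial S$ (Type E); and $|S|=4$ with $S$ inducing a $4$-cycle and $\bar S$ likewise, again $F=\partial S$ (Type N). I would then confirm that these families are genuinely disjoint: the four edges meeting $uv$ never contain all three edges at a single vertex, so an edge-separating $4$-cut is never vertex-separating, and this makes the priority ordering V $>$ E $>$ N of Definition~\ref{def:induced} partition the cuts, ruling out any double counting.

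With the classification in hand, each count is a bookkeeping sum over chain lengths. For Type V, the three chains at $v$ contribute $(s+1)^{i}s^{3-i}$ when $v$ meets exactly $i$ edges of $X$, and summing the length $\ell(f)$ of the fourth chain over the $9$ edges $f$ not at $v$ yields the factor $(n+4)-(3s+i)$, since the total length of all chains is $n+4$ and the three at $v$ total $3s+i$; grouping vertices by $i$ gives the formula with coefficients $p_i(G)$. For Type E, the four edges meeting a fixed edge contribute $(s+1)^{j}s^{4-j}$ when exactly $j$ of them lie in $X$, and grouping the $12$ edges by $j$ gives the coefficients $q_i(G)$. For Type N I would identify the nontrivial cuts explicitly: in $Q$ these are the three perfect matchings $F_1,F_2,F_3$, each separating a pair of opposite $4$-faces, whereas in $W$ the analogous splits into two $4$-cycles are only $E_1$ and $E_2$; crucially $E_3=\{15,26,37,48\}$ is \emph{not} a cut, because deleting the four ``diagonal'' edges of $W$ leaves the cycle $C_8$ connected. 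Each surviving cut contributes $(s+1)^{z_k(G)}s^{4-z_k(G)}$, yielding the two separate Type-N formulas.

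The main obstacle is the Type-N enumeration, that is, being certain the lists $\{F_1,F_2,F_3\}$ and $\{E_1,E_2\}$ are complete and free of spurious members. This rests on two facts to be checked by hand: that in $Q$ and in $W$ the only $4$-element vertex sets inducing four internal edges are $4$-cycles whose complements are again $4$-cycles (this uses girth $4$ together with the explicit structure of the cube and of the M\"obius ladder), and that $E_3$ fails to disconnect $W$ while all three color classes of $Q$ do disconnect it. Once these are verified, the three type-counts combine into the four displayed identities.
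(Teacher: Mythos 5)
Your proposal is correct and follows essentially the route the paper intends: the paper derives the lemma directly from Definition~\ref{def:induced} together with the fact that each $G\in\mathcal{C}_{n,n+4}^3$ is a vertex-fair subdivision $D(G)\odot_s X$ with $D(G)\in\{Q,W\}$, which is exactly your reduction to enumerating the $4$-edge-cuts of $Q$ and $W$ by type and weighting each by the product of chain lengths. Your classification via $|\partial S|=3|S|-2e(S)$ (forcing $|S|\in\{1,2,4\}$ and, for $|S|=4$, a $C_4$--$C_4$ split), together with the checks that the type families are disjoint and that $E_3$ is not a cut of $W$ while $F_1,F_2,F_3$ are cuts of $Q$, supplies precisely the routine verifications the paper leaves implicit.
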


Consider, for each integer $n$ such that $n\geq 8$, the only integers 
$r$ and $s$ given in the statement of Lemma~\ref{lemma:induced-4-cuts}. As each graph in $\mathcal{C}_{n,n+4}^{3}$ is either a vertex-fair subdivision of $Q$ or a vertex-fair subdivision of $W$, we can obtain for each $r$ and $s$ the list of all nonisomorphic graphs in $\mathcal{C}_{n,n+4}^3$. Then, we can find 
the value $\mu_4^{\mathrm{I}}(G)$ given by the sum $\mu_4^{\mathrm{V}}(G)+\mu_4^{\mathrm{E}}(G)+\mu_4^{\mathrm{N}}(G)$ to obtain the graph $G$ in $\mathcal{C}_{n,n+4}^{3}$
which minimizes $\mu_4^{\mathrm{I}}(G)$. As all graphs in $\mathcal{C}_{n,n+4}^{3}$ are fair, we can finally apply Proposition~\ref{prop:mu^I} to obtain the graph $G$ in 
$\mathcal{C}_{n,n+4}^3$ which minimizes $\mu_4(G)$. Using the previous methodology, the following result can be proved. 
 
\begin{proposition}\label{proposition:min4} 
For each positive integer $n$ such that $n\geq 8$, $\mathcal{C}_{n,n+4}^{4}=\{G_n\}$.
\end{proposition}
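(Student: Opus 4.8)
The plan is to exploit Proposition~\ref{prop:mu^I}. By Lemma~\ref{lemma:QoW}, every graph in $\mathcal{C}_{n,n+4}^{3}$ is a vertex-fair subdivision of $Q$ or of $W$; in particular each such graph is fair and has exactly $12$ chains, so Proposition~\ref{prop:mu^I} (with $k=4$, $t=12$) applies and it suffices to minimize $\mu_4^{\mathrm I}(G)=\mu_4^{\mathrm V}(G)+\mu_4^{\mathrm E}(G)+\mu_4^{\mathrm N}(G)$ over $\mathcal{C}_{n,n+4}^{3}$, using the closed forms of Lemma~\ref{lemma:induced-4-cuts}. Fix $n$, equivalently fix the pair $(r,s)$; note $s\ge 1$ since $n\ge 8$. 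I would first observe that $\mu_4^{\mathrm V}$ takes the same value for every candidate: the vertex-fair condition forces the number of length-$(s+1)$ chains incident to each of the eight vertices of $D(G)$ to be $\lfloor r/4\rfloor$ or $\lceil r/4\rceil$ (their total is $2r$ over $8$ vertices), so the tuple $(p_0(G),p_1(G),p_2(G),p_3(G))$ is determined by $r$ alone, and hence so is $\mu_4^{\mathrm V}$ by \eqref{muV}. Consequently the whole problem reduces to minimizing $\mu_4^{\mathrm E}(G)+\mu_4^{\mathrm N}(G)$.

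The decisive structural point is that $W$ has strictly fewer nontrivial $4$-edge-cuts than $Q$. Writing $X$ for the set of length-$(s+1)$ chains, the sets $F_1,F_2,F_3$ partition $E(Q)$ and each is a nontrivial cut, whereas $E_1,E_2,E_3$ partition $E(W)$ but only $E_1,E_2$ are cuts, since $E_3$ is the set of spokes and its removal leaves the rim cycle connected. Because the sequence $i\mapsto(s+1)^{i}s^{4-i}=s^4\bigl((s+1)/s\bigr)^{i}$ is positive, strictly increasing and strictly convex, every term of $\mu_4^{\mathrm N}$ is at least $s^4$; thus $\mu_4^{\mathrm N}(G)\ge 3s^4$ for a subdivision of $Q$, while a subdivision of $W$ can reach $2s^4$ by placing all long chains inside $E_3$. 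The same convexity shows, via \eqref{muE} together with the identities $\sum_i q_i(G)=12$ and $\sum_i i\,q_i(G)=4r$, that $\mu_4^{\mathrm E}(G)$ is minimized by the most balanced placements, namely those for which the number of long chains incident to each edge is as close to $r/3$ as possible.

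With these two reductions I would carry out the finite analysis over $r\in\{0,1,\ldots,11\}$. For each $r$ I would use the automorphism groups of $Q$ and $W$ to list the vertex-fair subdivisions up to isomorphism, evaluate $\mu_4^{\mathrm E}+\mu_4^{\mathrm N}$ from \eqref{muE}--\eqref{muN2}, and identify the minimizer. The target is to establish that in every case the unique minimizer is a subdivision of $W$ whose long chains are distributed exactly as in Definition~\ref{definition:Gn}, that is $G_n=W\odot_s X_r$, where the two exceptional edge-sets $X_0$ and $X_8$ account for the residues $r=0$ and $r=8$. Matching the computed optimal edge-set $X$ against $X_r$ then yields $\mathcal{C}_{n,n+4}^{4}=\{G_n\}$.

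I expect the main obstacle to be the $Q$-versus-$W$ comparison combined with the uniqueness claim, rather than either optimization in isolation. The difficulty is that the placement driving $\mu_4^{\mathrm N}$ down for $W$ (loading the spokes $E_3$ with long chains) is in tension with the balance that minimizes $\mu_4^{\mathrm E}$, so the two summands cannot be optimized independently; one must show that the nontrivial-cut deficit of $W$ always outweighs any resulting $\mu_4^{\mathrm E}$ penalty, uniformly in $s\ge 1$, and that no subdivision of $Q$ ever ties the optimal $W$-subdivision. Proving \emph{strict} inequality, hence uniqueness, in the near-balanced cases and verifying that the residue $r=8$ genuinely forces the non-consecutive edge-set $X_8$ are precisely the places where the convexity estimates must be pushed to their exact integer form.
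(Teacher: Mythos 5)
Your proposal is correct and follows essentially the same route as the paper: it reduces min-$\mu_4$ over $\mathcal{C}_{n,n+4}^{3}$ to min-$\mu_4^{\mathrm I}$ via Proposition~\ref{prop:mu^I}, evaluates $\mu_4^{\mathrm V}+\mu_4^{\mathrm E}+\mu_4^{\mathrm N}$ with the closed forms of Lemma~\ref{lemma:induced-4-cuts}, and settles the minimizer by a finite case analysis over the residue $r\in\{0,1,\ldots,11\}$, which is exactly the methodology the paper itself only sketches before stating the proposition. Your extra observations---that vertex-fairness determines $(p_0(G),p_1(G),p_2(G),p_3(G))$, and hence $\mu_4^{\mathrm V}$, as a function of $r$ alone, and that the remaining comparison is a tension between convexity-driven balance in $\mu_4^{\mathrm E}$ and the two-versus-three nontrivial $4$-cuts of $W$ versus $Q$---are correct refinements within that same framework, and you rightly flag the $r=8$ case (where $X_8$ is forced by vertex-fairness) and the uniqueness claim as the points requiring the exact integer verification.
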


Now we will construct, for each integer $n$ such that $n\geq 13,$ a graph $H_{n}$ in $\mathcal{C}_{n,n+4}$. Finally, we will prove that there exists some positive integer $n_0$ such that $\mu_{5}(H_n)<\mu_5(G_n)$ for all $n\geq n_0$. 

\begin{definition}\label{definition:Hn}
Let $\ell_1,\ell_2,\ldots,\ell_{12}$ be positive integers. Denote $W(\ell_1,\ell_2,\ldots,\ell_{12})$ the graph that arises from $W$ by subdividing $\ell_i-1$ times the edge $e_i$ of $W$ for each $i\in \{1,2,\ldots,12\}$. 
For each $n\geq 13$, let $r$ and $s$ be the unique integers such that $r\in \{0,1,\ldots,11\}$ and 
$n+4=12s+r$. Define $\ell_i=s+1$ if $e_i \in X_r$, or $\ell_i=s$ otherwise. Observe  that 
$G_n$ is precisely $W(\ell_1,\ell_2,\ldots,\ell_{12})$. 
Define $H_{n}$ as $W(\ell_1',\ell_2',\ldots,\ell_{12}')$ where $\ell_{1}'=\ell_1+1$, $\ell_{5}'=\ell_{5}-1$, and $\ell_{i}'=\ell_{i}$ when $i\in \{1,2,\ldots,12\}-\{1,5\}$.
\end{definition}

Clearly, $H_n$ has as many vertices and edges as $G_n$. 
The definition of $H_n$ makes sense since 
we must have that $\ell_i'\geq 1$ for each $i\in \{1,2,\ldots,12\}$, and $\ell_5'\geq 1$ only when $n\geq 13$. 
Notice that $H_n$ is not a fair graph. Our goal is to prove that there exists some positive integer $n_0$ such that $\mu_{5}(H_n)<\mu_5(G_n)$ for all $n\geq n_0$. As both $G_n$ and $H_n$ are $2$-connected graphs on more edges than vertices, by Lemma~\ref{lemma:objetivo} and equations~\eqref{eq:aux} and~\eqref{eq:induced},
\begin{align}
  \mu_5(G_n)&=\binom{n+4}{5}-\phi_{12}^{(5)}(\ell_1,\ell_2,\ldots,\ell_{12})+\mu_5^{\mathrm I}(G_n), \label{W}\\
  \mu_5(H_n)&=\binom{n+4}{5}-\phi_{12}^{(5)}(\ell_1',\ell_2',\ldots,\ell_{12}')+\mu_5^{\mathrm I}(H_n).   \label{T}
\end{align}

In achieving our goal, we need to find the number of $5$-edge-cuts in $W$. 
\begin{lemma}[\cite{2023-RomeroSafe}]\label{lemma:count5}
Each nontrivial 5-edge-cut of $W$ is either $P_3$-separating or $C_4$-separating.
\end{lemma}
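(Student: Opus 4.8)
The plan is to classify every nontrivial $5$-edge-cut $F$ of $W$ by looking directly at the connected components of $W-F$, using three structural facts about $W$: it is cubic, it has $8$ vertices, and it is triangle-free (its girth is $4$). Write $\partial S$ for the set of edges of $W$ with exactly one endpoint in a vertex set $S$, let $E_W(S)$ be the set of edges with both endpoints in $S$, and set $e_S=|E_W(S)|$. Since $W$ is cubic, counting incidences gives $|\partial S|=3|S|-2e_S$. If $S$ is the vertex set of a connected component of $W-F$, then every edge leaving $S$ has been deleted, so $\partial S\subseteq F$ and hence $|\partial S|\le 5$; this single inequality, together with triangle-freeness, will pin down the subgraph induced on each component.

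First I would rule out small components. If some component of $W-F$ were a single vertex $v$, then $\partial\{v\}\subseteq F$ would exhibit $F$ as vertex-separating; if some component were a pair $\{u,v\}$, then its connectedness forces $uv\in E(W)$, and $\partial\{u,v\}\subseteq F$ together with $uv\notin F$ would exhibit $F$ as edge-separating. Both contradict the hypothesis that $F$ is nontrivial, so every component of $W-F$ has at least $3$ vertices. Because a partition of $8$ into blocks of size at least $3$ can only be $\{3,5\}$ or $\{4,4\}$, the graph $W-F$ has exactly two components, of sizes $3$ and $5$, or $4$ and $4$.

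Next I would settle the two cases. In the $\{3,5\}$ case, let $S$ be the side with $|S|=3$; the bound $9-2e_S=|\partial S|\le 5$ gives $e_S\ge 2$ and triangle-freeness gives $e_S\le 2$, so $W[S]\cong P_3$ and $|\partial S|=5$. As $\partial S\subseteq F$ and $|F|=5$, we get $F=\partial S$, hence $F\cap E_W(S)=\emptyset$ and $F$ separates the $P_3$ on $S$. In the $\{4,4\}$ case, each side has $4$ vertices; the bound $12-2e_S=|\partial S|\le 5$ gives $e_S\ge 4$ while triangle-freeness gives $e_S\le 4$, so $e_S=4$, both sides induce a $C_4$, and $|\partial S|=4$. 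Since $|F|=5$, we have $F=\partial S\cup\{e\}$ for a single extra edge $e\notin\partial S$.

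The step I expect to be the main obstacle is precisely this $\{4,4\}$ case, because the extra edge $e$ need not run between the two sides. If $e$ lies inside the $C_4$ on $S$, then $F\cap E_W(S)\ne\emptyset$, so $F$ does \emph{not} separate $S$, and one cannot simply read off a $C_4$-separation from that component. The resolution is to transfer attention to the complementary side: from $e\in E_W(S)$ one gets $F\cap E_W(V\setminus S)=\emptyset$, while $\partial(V\setminus S)=\partial S\subseteq F$, so $F$ separates $V\setminus S$, which also induces a $C_4$; thus $F$ is $C_4$-separating. Symmetrically, if $e\in E_W(V\setminus S)$ then $F$ separates the $C_4$ on $S$. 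Once one is careful to apply the definition of separation to whichever side carries no edge of $F$, rather than to a fixed smallest component, every case yields a $P_3$- or $C_4$-separation and the lemma follows.
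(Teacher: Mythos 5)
Your proof is correct. One caveat on comparison: the present paper does not prove Lemma~\ref{lemma:count5} at all---it is imported verbatim from~\cite{2023-RomeroSafe}---so there is no in-paper argument to measure you against, and your write-up stands as a self-contained proof of the quoted statement. On its own terms the argument is sound: the handshake identity $|\partial S|=3|S|-2e_S$ for the cubic graph $W$, the observation that nontriviality forbids components of size $1$ or $2$ (a singleton component makes $F$ vertex-separating; a $2$-vertex component is connected only via a surviving edge $uv\notin F$ with $\partial\{u,v\}\subseteq F$, making $F$ edge-separating), and triangle-freeness of $W$ (girth $4$, checkable from Figure~\ref{figure:cubicosB}) together force the split $3+5$ with $e_S=2$, hence $W[S]\cong P_3$ and $F=\partial S$ since $|\partial S|=5=|F|$, or the split $4+4$ with $e_S=4$ on both sides, where Mantel's bound plus the fact that the unique triangle-free graph on $4$ vertices with $4$ edges is $C_4$ pins down both induced subgraphs. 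You also correctly isolated and resolved the one genuinely delicate point: in the $4+4$ case $|\partial S|=4<5=|F|$, so the fifth edge of $F$ lies inside one of the two $4$-cycles, and under the paper's definition of separation (edges of $F$ disjoint from $S$ are permitted, edges of $F$ inside $S$ are not) the cut must be declared to separate the side \emph{not} containing that edge; your switch to the complementary $C_4$ does exactly this, and a fixed choice such as ``the smaller component'' would indeed have broken down here. No gaps.
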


Let $G$ be either $G_n$ or $H_n$. Denote $\mu_5^{\mathrm P_3}(G)$ (resp. $\mu_5^{\mathrm C_4}(G)$) the number $5$-edge-cuts of $G$ induced by $P_3$-separating (resp. $C_4$-separating) edge-cuts of $D(G)$. By Lemma~\ref{lemma:count5},
\begin{equation}\label{equation:induced5}
\mu_5^{\mathrm I}(G)=\mu_5^{\mathrm V}(G)+\mu_5^{\mathrm E}(G)+\mu_5^{\mathrm P_3}(G)+\mu_5^{\mathrm C_4}(G).
\end{equation}

Let us find $\mu_5(G_n)-\mu_5(H_n)$ by replacing~\eqref{equation:induced5} into~\eqref{W} and \eqref{T},
\begin{align}
\mu_5(G_n)-\mu_5(H_n)&=\phi_{12}^{(5)}(\ell_1',\ell_2',\ldots,\ell_{12}')-\phi_{12}^{(5)}(\ell_1,\ell_2,\ldots,\ell_{12}) \notag \\
&+ (\mu_5^{\mathrm V}(G_n)-\mu_5^{\mathrm V}(H_n))+ (\mu_5^{\mathrm E}(G_n)-\mu_5^{\mathrm E}(H_n)) \notag \\ 
&+ (\mu_5^{\mathrm P_3}(G_n)-\mu_5^{\mathrm P_3}(H_n)) + (\mu_5^{\mathrm C_4}(G_n)-\mu_5^{\mathrm C_4}(H_n)) \label{eq:total}
\end{align}
The following $5$ lemmas give closed forms for each of the $5$ terms that appear on the right-hand side of equation~\eqref{eq:total}. The proof 
of Lemma~\ref{lemma:unfair} is straightforward. The rationale behind the proofs of Lemmas~\ref{lemma:vertextrivial}, \ref{lemma:edgetrivial}, \ref{lemma:nontrivial1} and \ref{lemma:nontrivial2} is similar. 
First, we identify which $5$-edge-cuts in $W$ we must consider. Then, we count the respective number of induced $5$-edge-cuts for both $G_n$ and $H_n$. 
Finally, we take the difference $\mu_5^{\mathrm X}(G_n)-\mu_5^{\mathrm{X}}(G)$ where $\mathrm{X}$ equals $\mathrm{V}$, $\mathrm{E}$, $\mathrm{P_3}$, or $\mathrm{C_4}$, respectively.  

\begin{lemma}\label{lemma:unfair}
For each integer $n$ such that $n\geq 13$, 
\begin{align}
\phi_{12}^{(5)}(\ell_1',\ell_2',\ldots,\ell_{12}')-\phi_{12}^{(5)}(\ell_1,\ell_2,\ldots,\ell_{12}) &= (\ell_5-\ell_1-1)\sum_{J \in \binom{[12]-\{1,5\}}{3}}\prod_{i\in J}\ell_i. \label{term:unfair}
\end{align}
\end{lemma}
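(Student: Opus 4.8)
The plan is to exploit the fact that $\phi_{12}^{(5)}$ is simply the elementary symmetric polynomial of degree $5$ in the twelve variables $\ell_1,\ldots,\ell_{12}$, and that passing from $(\ell_i)$ to $(\ell_i')$ changes only the two coordinates indexed by $1$ and $5$. So I would classify each $5$-subset $J\in\binom{[12]}{5}$ according to its intersection with $\{1,5\}$. Writing $T=[12]-\{1,5\}$ and abbreviating $\sigma_k=\sum_{J\in\binom{T}{k}}\prod_{i\in J}\ell_i$ for the elementary symmetric polynomials in the ten unchanged lengths, the four cases ($J$ contains neither of $1,5$, only $1$, only $5$, or both) give the decomposition
\[ \phi_{12}^{(5)}(\ell_1,\ldots,\ell_{12}) = \sigma_5 + (\ell_1+\ell_5)\,\sigma_4 + \ell_1\ell_5\,\sigma_3. \]

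The second step is to apply the same decomposition to $(\ell_i')$. Since $T$ consists precisely of the indices whose lengths are unchanged, the polynomials $\sigma_3,\sigma_4,\sigma_5$ take identical values for the primed and unprimed tuples, so the difference reduces entirely to the change in the two scalar coefficients. The key cancellation is that $\ell_1'+\ell_5'=(\ell_1+1)+(\ell_5-1)=\ell_1+\ell_5$, whence the $\sigma_4$ term contributes nothing to the difference, while $\ell_1'\ell_5'-\ell_1\ell_5=(\ell_1+1)(\ell_5-1)-\ell_1\ell_5=\ell_5-\ell_1-1$.

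Combining these, the difference equals $(\ell_5-\ell_1-1)\,\sigma_3$, and since $\sigma_3=\sum_{J\in\binom{[12]-\{1,5\}}{3}}\prod_{i\in J}\ell_i$ this is exactly the right-hand side of~\eqref{term:unfair}. The argument is a purely algebraic identity, so there is no genuine obstacle; the only care required is the bookkeeping of the four-case decomposition and the verification that the coefficient of $\sigma_4$ is preserved, which is what makes that term drop out of the difference and leaves the single clean term claimed by the lemma.
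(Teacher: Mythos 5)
Your proposal is correct, and it is precisely the straightforward computation the paper has in mind (the paper omits the proof, remarking only that it is straightforward): decompose the elementary symmetric polynomial by the intersection of $J$ with $\{1,5\}$, note that $\ell_1'+\ell_5'=\ell_1+\ell_5$ kills the $\sigma_4$ term, and that $\ell_1'\ell_5'-\ell_1\ell_5=\ell_5-\ell_1-1$ yields the claimed coefficient of $\sigma_3$. Nothing further is needed.
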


\begin{lemma}\label{lemma:vertextrivial}
For each integer $n$ such that $n\geq 13$, 
\begin{align}
&\mu_5^{\mathrm V}(G_n)-\mu_5^{\mathrm V}(H_n)=
\ell_3\ell_6\ell_9\ell_{10}+ \ell_4\ell_7\ell_9\ell_{10}
-\ell_2\ell_3\ell_7\ell_{12}- \ell_3\ell_6\ell_9\ell_{12} \notag\\
&+(\ell_1+1-\ell_5)(\ell_2\ell_7\ell_{12}+\ell_4\ell_7\ell_{10}+\ell_3\ell_6\ell_9+\ell_2\ell_6\ell_{11}+\ell_4\ell_8\ell_{11}) \notag\\
&- (\ell_1+1-\ell_5)(\ell_8\ell_9\ell_{10}+\ell_3\ell_8\ell_{12}+\ell_4\ell_8\ell_{11}) \notag\\
&+\ell_3\ell_{12}(\sum_{\{i,j\}\in \binom{[12]-\{1,3,5,12\}}{2}}\ell_i\ell_j + (\ell_1+1-\ell_5)\sum_{i\in [12]-\{1,3,5,12\}}\ell_i) \notag\\
&+\ell_9\ell_{10} ((\ell_1+1-\ell_5)\sum_{i\in [12]-\{1,5,9,10\}}\ell_i - \sum_{\{i,j\}\in \binom{[12]-\{1,5,9,10\}}{2}}\ell_i\ell_j) \notag\\
&+\ell_8(\ell_1+1-\ell_5)\sum_{\{i,j\} \in \binom{[12]-\{1,5,8\}}{2}}\ell_i \ell_j. \label{term:vertextrivial}
\end{align}
\end{lemma}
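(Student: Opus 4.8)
The plan is to evaluate $\mu_5^{\mathrm V}(G)$ directly from its definition as the number of induced $5$-edge-cuts of $G$ arising from the vertex-separating $5$-edge-cuts of $D(G)=W$, and then specialize to $G\in\{G_n,H_n\}$ and subtract. Since $W$ is cubic, a vertex-separating $5$-edge-cut of $W$ separating a single vertex $v$ consists of the three edges of $W$ incident to $v$ together with two further edges not incident to $v$; conversely, any such choice yields a vertex-separating $5$-edge-cut. By Definition~\ref{def:induced}, each such cut $\{f_1,\ldots,f_5\}$ of $W$ induces exactly $\prod_{i=1}^{5}\ell(f_i)$ cuts of $G$, where $\ell(f_i)$ is the length of the chain of $G$ replacing the edge $f_i$. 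Hence $\mu_5^{\mathrm V}(G)$ equals the sum of these products over all vertex-separating $5$-edge-cuts of $W$. The one subtlety here is \emph{double counting}: a set $F$ of five edges separates two distinct single vertices $v,w$ precisely when $vw\in E(W)$ and $F$ is the union of the edges incident to $v$ and to $w$ (such an $F$ being simultaneously edge-separating for $vw$); each such $F$ must be counted once, not twice.

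First I would record the per-cut difference between $G_n$ and $H_n$. Since $\ell_1'=\ell_1+1$, $\ell_5'=\ell_5-1$, and $\ell_i'=\ell_i$ otherwise, the product attached to a vertex-separating cut of $W$ changes between $G_n$ and $H_n$ only if the cut uses the edge $e_1$ or the edge $e_5$. A short computation gives that the contribution of a cut to $\mu_5^{\mathrm V}(G_n)-\mu_5^{\mathrm V}(H_n)$ is $0$ if it uses neither $e_1$ nor $e_5$; it equals minus the product of the lengths of the other four chains if it uses $e_1$ but not $e_5$; it equals the product of the lengths of the other four chains if it uses $e_5$ but not $e_1$; and it equals $(\ell_1+1-\ell_5)$ times the product of the lengths of the three remaining chains if it uses both, since $\ell_1\ell_5-(\ell_1+1)(\ell_5-1)=\ell_1+1-\ell_5$.

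Next I would organize the sum according to which vertex $v$ is separated, using the incidences read off from Figure~\ref{figure:Wedges}: vertex $1$ is incident to $e_1,e_5,e_8$; vertex $2$ to $e_3,e_5,e_{12}$; vertex $5$ to $e_1,e_9,e_{10}$; and vertices $3,4,6,7,8$ are incident to neither $e_1$ nor $e_5$. For $v=1$ every cut uses both $e_1$ and $e_5$, so the total contribution is $\ell_8(\ell_1+1-\ell_5)$ times the sum over the $\binom{9}{2}$ choices of the two extra edges, yielding the last term of the statement. For $v=2$ (respectively $v=5$) every cut uses $e_5$ (respectively $e_1$), and splitting according to whether the extra pair contains $e_1$ (respectively $e_5$) produces the terms with prefactor $\ell_3\ell_{12}$ (respectively $\ell_9\ell_{10}$). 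For each $v\in\{3,4,6,7,8\}$ the key observation is that the contributions from cuts whose extra pair is $\{e_1,e_b\}$ and from cuts whose extra pair is $\{e_5,e_b\}$ cancel in pairs as $b$ ranges over the admissible indices, so that only the cut with extra pair $\{e_1,e_5\}$ survives, contributing $(\ell_1+1-\ell_5)$ times the product of the lengths of the three chains at $v$; summing over these five vertices gives the term $(\ell_1+1-\ell_5)(\ell_2\ell_7\ell_{12}+\cdots+\ell_4\ell_8\ell_{11})$.

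Finally I would apply the double-counting correction. The only repeated cuts that affect the difference are the unions of the edge-stars of the endpoints of the seven edges of $W$ incident to $\{1,2,5\}$, namely $e_1,e_5,e_8,e_3,e_{12},e_9,e_{10}$; each such cut is subtracted once, and evaluating its per-cut difference (using both, $e_5$ only, or $e_1$ only, according to the case) yields exactly the four products of four chain lengths together with the subtracted $(\ell_1+1-\ell_5)(\ell_8\ell_9\ell_{10}+\ell_3\ell_8\ell_{12}+\ell_4\ell_8\ell_{11})$ on the first lines of the statement. Assembling the vertex contributions and the corrections gives the claimed identity. I expect the main obstacle to be purely organizational: exhaustively and correctly tracking which vertex-separating cuts of $W$ involve $e_1$ or $e_5$, pairing the product-of-four contributions so that they cancel for the five ``neutral'' vertices, and subtracting each doubly counted star-union exactly once and with the correct sign.
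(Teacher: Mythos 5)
Your proposal is correct and follows exactly the route the paper sketches for Lemmas~\ref{lemma:vertextrivial}--\ref{lemma:nontrivial2}: enumerate the vertex-separating $5$-edge-cuts of $W$ (star of a vertex plus two further edges, with each star-union $\mathrm{star}(v)\cup\mathrm{star}(w)$, $vw\in E(W)$, counted once), compute the per-cut difference via $\ell_1\ell_5-(\ell_1+1)(\ell_5-1)=\ell_1+1-\ell_5$, and sum; I checked that your vertex-by-vertex contributions (vertices $1,2,5$ giving the last three lines, the cancellation at vertices $3,4,6,7,8$ giving the $(\ell_1+1-\ell_5)(\ell_2\ell_7\ell_{12}+\cdots)$ term) and your seven double-count corrections reproduce the stated identity term by term. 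One harmless slip: by the paper's definitions $\mathrm{star}(v)\cup\mathrm{star}(w)$ is \emph{not} edge-separating for $vw$ (it contains the edge $vw$, which has both endpoints in $\{v,w\}$), but your argument never uses that parenthetical claim, only the correct fact that such a cut is vertex-separating for both $v$ and $w$.
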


\begin{lemma}\label{lemma:edgetrivial}
For each integer $n$ such that $n\geq 13$, 
\begin{align}
\mu_5^{\mathrm E}(G_n)-\mu_5^{\mathrm E}(H_n)&= 
\ell_8\ell_9\ell_{10}(n+4-\ell_1-\ell_5-\ell_8-\ell_9-\ell_{10}) \notag\\
&+\ell_6\ell_9\ell_{12}(n+5-\ell_3-2\ell_5 - \ell_6-\ell_9-\ell_{12}) \notag\\
&+\ell_2\ell_3\ell_{7}(n+5-\ell_2-\ell_3-2\ell_5-\ell_{7}-\ell_{12}) \notag\\
&- \ell_3\ell_8\ell_{12}(n+4-\ell_1-\ell_3-\ell_5-\ell_8-\ell_{12}) \notag \\
&- \ell_3\ell_6\ell_{10}(n+3-2\ell_1-\ell_3-\ell_6-\ell_9-\ell_{10}) \notag\\
&-\ell_4\ell_7\ell_{9}(n+3-2\ell_1-\ell_4-\ell_7-\ell_{9}-\ell_{10}) \notag \\ 
&+ \ell_4\ell_{11}(\ell_1+1-\ell_5)(n+4-\ell_1-\ell_4-\ell_5-\ell_8-\ell_{11}). \label{term:edgetrivial}
\end{align}
\end{lemma}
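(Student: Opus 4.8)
The plan is to use Definition~\ref{def:induced} to write $\mu_5^{\mathrm E}(G)$, for $G\in\{G_n,H_n\}$, as a sum over the edge-separating (but not vertex-separating) $5$-edge-cuts of the distillation $W$, each such cut $F$ contributing the product $\prod_{f\in F}\ell(f)$ of the lengths of the five chains of $G$ that it meets. The first task is therefore to describe these cuts of $W$ explicitly. Writing $\partial\{u,v\}$ for the set of edges of $W$ with exactly one endpoint in $\{u,v\}$, every Type-E cut has the form $\partial\{u,v\}\cup\{g\}$, where $uv$ is the separated edge and $g$ is an edge incident with neither $u$ nor $v$; indeed $|\partial\{u,v\}|=4$ since $W$ is cubic, and the single extra edge $g$ makes up the fifth. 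Two structural facts about $W$, both consequences of its being cubic and triangle-free, keep the bookkeeping clean: first, no vertex can have all three of its incident edges inside such a cut, so every cut of this form is automatically non-vertex-separating and hence genuinely Type-E; second, no $5$-edge-cut of $W$ can separate two disjoint pairs of adjacent vertices simultaneously (that would force at least three edges between the two pairs, creating a triangle), so each Type-E cut determines its separated edge $uv$ uniquely and no cut is counted twice.

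Since $G_n$ and $H_n$ differ only in the two chains $e_1$ and $e_5$, with $\ell_1'=\ell_1+1$ and $\ell_5'=\ell_5-1$, only cuts meeting $e_1$ or $e_5$ contribute to $\mu_5^{\mathrm E}(G_n)-\mu_5^{\mathrm E}(H_n)$. For such a cut $F$ the contribution is $+\prod_{f\in F\setminus\{e_5\}}\ell(f)$ when $e_5\in F$ but $e_1\notin F$, it is $-\prod_{f\in F\setminus\{e_1\}}\ell(f)$ when $e_1\in F$ but $e_5\notin F$, and it is $(\ell_1-\ell_5+1)\prod_{f\in F\setminus\{e_1,e_5\}}\ell(f)$ when both lie in $F$. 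I would then organise the enumeration by the separated edge $uv$, using the labelling of Figure~\ref{figure:Wedges}. Computing $\partial\{u,v\}$ for each of the twelve edges shows that exactly seven of them, namely those separating the endpoints of $e_1,e_3,e_5,e_8,e_9,e_{10}$ and $e_{12}$, have a boundary meeting $\{e_1,e_5\}$, with $\partial\{8,1\}$ meeting both; each of these seven produces exactly one of the seven terms of~\eqref{term:edgetrivial} once the extra edge $g$ is summed over its admissible values and $\sum_{i=1}^{12}\ell_i=n+4$ is used to rewrite the resulting sums of the remaining chain lengths as the displayed shifts $n+3-\cdots$, $n+4-\cdots$, $n+5-\cdots$. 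For each of the remaining five separated edges the boundary avoids both $e_1$ and $e_5$, so the only contributing cuts are the two obtained by taking $g=e_1$ and $g=e_5$; these share the same product over the four boundary chains but carry opposite signs, so they cancel and contribute nothing.

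The main obstacle is the enumeration itself: one must verify, edge by edge on the fixed graph $W$, exactly which boundaries $\partial\{u,v\}$ contain $e_1$ or $e_5$, which choices of the extra edge $g$ produce a cut containing $e_1$, $e_5$, or both, and, crucially, that the two structural facts above really hold so that no cut is vertex-separating or double-counted. Triangle-freeness of $W$ is precisely what guarantees these facts, so I would state and justify them before starting the count. The rest is the routine but lengthy algebra of summing the four-fold products over the admissible extra edges and collapsing them, via $\sum_{i=1}^{12}\ell_i=n+4$, into the exact constant shifts ($n+3$, $n+4$, $n+5$) and the coefficients such as the $2\ell_5$ and $2\ell_1$ appearing in~\eqref{term:edgetrivial}; keeping track of the signs and of the both-containing cross terms arising from $\partial\{2,6\}$, $\partial\{2,3\}$, $\partial\{4,5\}$, $\partial\{5,6\}$ and $\partial\{8,1\}$ is where an arithmetic slip is most likely.
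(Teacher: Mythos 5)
Your proposal is correct and takes essentially the same approach as the paper, whose proof of Lemma~\ref{lemma:edgetrivial} is exactly the sketched three-step scheme you follow: identify the relevant Type-E $5$-edge-cuts of $W$ (your parameterization $\partial\{u,v\}\cup\{g\}$, justified by cubicity and triangle-freeness), count the induced cuts in $G_n$ and $H_n$ as chain-length products, and take the difference. Your enumeration is accurate: the seven separated edges $e_1,e_3,e_5,e_8,e_9,e_{10},e_{12}$ produce precisely the seven terms of~\eqref{term:edgetrivial} after rewriting via $\sum_{i=1}^{12}\ell_i=n+4$ (I checked several terms, including the cross terms with factor $\ell_1+1-\ell_5$, and they agree), and the cuts for the remaining five separated edges indeed cancel in pairs $g=e_1$, $g=e_5$.
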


\begin{lemma}\label{lemma:nontrivial1}
For each integer $n$ such that $n\geq 13$, 
\begin{align}
\mu_5^{\mathrm P_3}(G_n)-\mu_5^{\mathrm P_3}(H_n)&=
(\ell_1+1-\ell_5)(
\ell_2\ell_4\ell_6+ \ell_6\ell_{10}\ell_{12}+ \ell_7\ell_{10}\ell_{11}) \notag\\
&+\ell_2\ell_3\ell_4\ell_{10}+\ell_3\ell_6\ell_7\ell_{11}+\ell_2\ell_6\ell_7\ell_9+\ell_4\ell_7\ell_8\ell_9 \notag\\
&+\ell_3\ell_6\ell_8\ell_{10}+\ell_2\ell_9\ell_{11}\ell_{12}+\ell_4\ell_9\ell_{10}\ell_{11} \notag\\
&-\ell_2\ell_3\ell_7\ell_8-\ell_2\ell_4\ell_9\ell_{12}-\ell_3\ell_4\ell_6\ell_{7}-\ell_2\ell_3\ell_{10}\ell_{11} \notag\\
&-\ell_3\ell_4\ell_{11}\ell_{12}-\ell_6\ell_8\ell_9\ell_{12}-\ell_7\ell_8\ell_9\ell_{11}. \label{term:nontrivial1}
\end{align}
\end{lemma}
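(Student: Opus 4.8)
The plan is to express $\mu_5^{\mathrm P_3}(G)$ as a weighted sum over the $P_3$-separating $5$-edge-cuts of $W$, and then exploit the fact that $G_n$ and $H_n$ differ in exactly two chain lengths. By Definition~\ref{def:induced} and~\eqref{eq:induced}, for $G\in\{G_n,H_n\}$ one has $\mu_5^{\mathrm P_3}(G)=\sum_{F}\prod_{e_i\in F}\ell_i(G)$, where $F$ ranges over the $P_3$-separating $5$-edge-cuts of $W$ and $\ell_i(G)$ is the length of the chain of $G$ subdividing $e_i$. Since $W$ is cubic with girth $4$, every path of length two is an induced $P_3$, and its three-vertex set $S$ has boundary of size $3\cdot 3-2\cdot 2=5$; hence each induced $P_3$ yields exactly one $P_3$-separating $5$-edge-cut (its boundary), and conversely every such cut arises this way. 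As an induced $P_3$ is determined by its centre $v$ together with a choice of two of the three edges at $v$, there are $\sum_{v}\binom{3}{2}=24$ of them. I would list these $24$ cuts directly from Figure~\ref{figure:Wedges} by recording, for the path $a$--$v$--$b$, the third edge at $v$ together with the two edges at $a$ and the two edges at $b$ that do not lie on the path; a direct check shows the $24$ resulting boundary sets are pairwise distinct (and, having a $3$--$5$ vertex split, disjoint from the $C_4$-separating cuts treated in the next lemma).

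The key simplification is that $H_n$ is obtained from $G_n$ only by replacing $\ell_1$ by $\ell_1+1$ and $\ell_5$ by $\ell_5-1$. Consequently a cut $F$ contributes to $\mu_5^{\mathrm P_3}(G_n)-\mu_5^{\mathrm P_3}(H_n)$ an amount depending only on whether $e_1$ and $e_5$ lie in $F$: it is $0$ when $F$ contains neither; it is $-\prod_{e_i\in F\setminus\{e_1\}}\ell_i$ when $F$ contains $e_1$ but not $e_5$; it is $+\prod_{e_i\in F\setminus\{e_5\}}\ell_i$ when $F$ contains $e_5$ but not $e_1$; and it is $(\ell_1+1-\ell_5)\prod_{e_i\in F\setminus\{e_1,e_5\}}\ell_i$ when $F$ contains both, the last identity coming from $\ell_1\ell_5-(\ell_1+1)(\ell_5-1)=\ell_1+1-\ell_5$. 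Thus only the cuts meeting $\{e_1,e_5\}$ matter.

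Classifying the $24$ cuts accordingly, I expect exactly three to contain both $e_1=\{1,5\}$ and $e_5=\{1,2\}$, namely the boundaries of the paths $5$--$6$--$2$, $7$--$8$--$1$ and $1$--$8$--$4$; seven to contain $e_1$ alone; seven to contain $e_5$ alone; and the remaining seven to contain neither. Summing the four groups would give, respectively, the line carrying the factor $(\ell_1+1-\ell_5)$, the seven negative quartic monomials, the seven positive quartic monomials, and no contribution, which together yield the stated formula~\eqref{term:nontrivial1}.

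The only real obstacle is the bookkeeping: reading off the five boundary edges of each of the $24$ paths without transcription error and matching each resulting monomial to the correct term of~\eqref{term:nontrivial1}. This is routine but error-prone, so I would guard it with two consistency checks. First, by the edge-transitivity of $W$ each edge $e_i$ should appear in the same number of the $24$ cuts; a count gives exactly $10$ each, so in particular $e_1$ and $e_5$ each lie in $7+3=10$ cuts, matching the split above. Second, the number of cuts meeting $\{e_1,e_5\}$ is $10+10-3=17=7+7+3$, confirming that precisely these three groups contribute and that the seven remaining cuts cancel.
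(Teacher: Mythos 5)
Your proposal is correct and takes essentially the same route as the paper's (sketched) proof: identify the relevant $5$-edge-cuts of $W$ as the boundaries of the $24$ induced paths on three vertices, count the induced cuts of $G_n$ and $H_n$ as products of chain lengths, and observe that only cuts meeting $\{e_1,e_5\}$ contribute, with your $3$/$7$/$7$ classification (both, $e_1$ only, $e_5$ only) reproducing every monomial of~\eqref{term:nontrivial1} exactly, including the three paths $5$--$6$--$2$, $7$--$8$--$1$, $1$--$8$--$4$ giving the $(\ell_1+1-\ell_5)$ line. One caveat on a non-load-bearing step: the Wagner graph is \emph{not} edge-transitive (its automorphism group is dihedral of order $16$, under which the eight rim edges and the four chords form separate orbits, and $12\nmid 16$), so your first consistency check must rest on the direct count showing each edge lies in exactly $10$ of the $24$ cuts --- which is true --- rather than on transitivity.
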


\begin{lemma}\label{lemma:nontrivial2}
For each integer $n$ such that $n\geq 13$, 
\begin{align}
\mu_5^{\mathrm C_4}(G_n)-\mu_5^{\mathrm C_4}(H_n) &= \ell_7\ell_9\ell_{11}(n+5-2\ell_5-\ell_7-\ell_9-\ell_{11}). \label{term:nontrivial2}
\end{align}
\end{lemma}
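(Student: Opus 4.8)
The plan is to evaluate $\mu_5^{\mathrm C_4}(G_n)$ and $\mu_5^{\mathrm C_4}(H_n)$ directly from the definition of induced cuts, which gives $\mu_5^{\mathrm C_4}(G)=\sum_{F}\prod_{e_i\in F}\ell(e_i)$, where $F$ ranges over all $C_4$-separating $5$-edge-cuts of $W$ and $\ell(e_i)$ is the length of the chain of $G$ corresponding to the edge $e_i$. By Lemma~\ref{lemma:count5}, such cuts are exactly those cleanly separating a $4$-vertex set inducing a $C_4$ in $W$, so my first step is to enumerate the $4$-cycles of $W$. Using the edge labels of Figure~\ref{figure:Wedges}, one checks that $W$ has exactly four $4$-cycles, grouped into two complementary pairs: the pair on vertex sets $\{1,2,5,6\}$ and $\{3,4,7,8\}$, with common boundary $\{e_6,e_8,e_{10},e_{12}\}$; and the pair on $\{2,3,6,7\}$ and $\{1,4,5,8\}$, with common boundary $\{e_5,e_7,e_9,e_{11}\}$.

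Since each such $4$-vertex set has precisely four boundary edges, a $C_4$-separating $5$-edge-cut is formed by those four boundary edges together with one extra edge lying inside the complementary $C_4$. The second step is to list the resulting sixteen cuts: for the first pair the extra edge ranges over $\{e_1,e_2,e_3,e_4,e_5,e_7,e_9,e_{11}\}$, and for the second pair over $\{e_1,e_2,e_3,e_4,e_6,e_8,e_{10},e_{12}\}$.

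The key simplification is that $G_n$ and $H_n$ differ only in the chains of $e_1$ and $e_5$, with $\ell_1'=\ell_1+1$ and $\ell_5'=\ell_5-1$; hence only cuts containing $e_1$ or $e_5$ contribute to the difference. From the first pair exactly two cuts qualify, namely $\{e_1,e_6,e_8,e_{10},e_{12}\}$ and $\{e_5,e_6,e_8,e_{10},e_{12}\}$, whose contributions to $\mu_5^{\mathrm C_4}(G_n)-\mu_5^{\mathrm C_4}(H_n)$ are $-\ell_6\ell_8\ell_{10}\ell_{12}$ and $+\ell_6\ell_8\ell_{10}\ell_{12}$, so they cancel. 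From the second pair, whose boundary already contains $e_5$, every cut contributes: the unique one also containing $e_1$, namely $\{e_1,e_5,e_7,e_9,e_{11}\}$, contributes $(\ell_1-\ell_5+1)\ell_7\ell_9\ell_{11}$, while the remaining seven each contribute $\ell_7\ell_9\ell_{11}\ell_j$ as $e_j$ ranges over $\{e_2,e_3,e_4,e_6,e_8,e_{10},e_{12}\}$.

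Summing the surviving contributions factors out $\ell_7\ell_9\ell_{11}$, leaving the bracket $(\ell_1-\ell_5+1)+(\ell_2+\ell_3+\ell_4+\ell_6+\ell_8+\ell_{10}+\ell_{12})$. The final step is to simplify this using $\sum_{i=1}^{12}\ell_i=n+4$: the indices appearing in the parenthesised sum are exactly $[12]\setminus\{5,7,9,11\}$, so that sum equals $(n+4)-\ell_5-\ell_7-\ell_9-\ell_{11}$, and the bracket collapses to $n+5-2\ell_5-\ell_7-\ell_9-\ell_{11}$, giving the claimed identity. The main obstacle is purely combinatorial bookkeeping: correctly enumerating the four $4$-cycles and their boundary edges, and tracking which of the sixteen cuts contain $e_1$ or $e_5$ together with the induced signs. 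The cancellation within the first pair provides a convenient consistency check on the enumeration.
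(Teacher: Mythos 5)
Your proof is correct and follows essentially the same route the paper indicates for Lemmas~\ref{lemma:vertextrivial}--\ref{lemma:nontrivial2}: identify the relevant $C_4$-separating $5$-edge-cuts of $W$ (the boundaries of the four induced $4$-cycles, each augmented by one edge of the complementary $4$-cycle), count the induced $5$-edge-cuts of $G_n$ and $H_n$ as products of chain lengths, and take the difference. Your enumeration of the sixteen cuts, the cancellation within the pair with boundary $\{e_6,e_8,e_{10},e_{12}\}$, and the final simplification via $\sum_{i=1}^{12}\ell_i = n+4$ all check out against the claimed identity~\eqref{term:nontrivial2}.
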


\begin{proposition}\label{proposition:least}
There exists some positive integer $n_0$ such that, for all $n\geq n_0$, $\mu_5(H_n)< \mu_5(G_n)$. 
\end{proposition}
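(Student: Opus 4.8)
The plan is to start from the decomposition~\eqref{eq:total}, which already expresses $\mu_5(G_n)-\mu_5(H_n)$ as the sum of the five differences evaluated in Lemmas~\ref{lemma:unfair}--\ref{lemma:nontrivial2}, and to show that this sum, viewed as a polynomial in $s$, has leading term $7s^4$. The key structural facts are that $\ell_i\in\{s,s+1\}$ for every $i$, that the pattern of which $\ell_i$ equal $s+1$ depends only on the residue $r\in\{0,1,\ldots,11\}$ of $n+4$ modulo $12$, and that $\ell_1+\cdots+\ell_{12}=n+4$ (since $G_n=W(\ell_1,\ldots,\ell_{12})$ has $\sum_i\ell_i$ edges and corank $5$). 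Writing $\ell_i=s+\epsilon_i$ with $\epsilon_i\in\{0,1\}$ turns the right-hand side of~\eqref{eq:total} into a polynomial in $s$ of degree at most $4$ whose coefficients depend only on $r$; there are at most twelve such polynomials. The whole argument then reduces to computing the coefficient of $s^4$ in each of the five contributions.

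First I would read off the $s^4$-coefficients. Lemma~\ref{lemma:unfair} contributes a term of degree exactly $3$, since its prefactor $\ell_5-\ell_1-1$ is bounded while the remaining sum is over products of three chain-lengths; it therefore does not affect the coefficient of $s^4$. In Lemma~\ref{lemma:vertextrivial} the degree-$4$ monomials are the four explicit products $\ell_3\ell_6\ell_9\ell_{10}$, $\ell_4\ell_7\ell_9\ell_{10}$, $\ell_2\ell_3\ell_7\ell_{12}$, $\ell_3\ell_6\ell_9\ell_{12}$ together with $\ell_3\ell_{12}\sum_{\{i,j\}}\ell_i\ell_j$ (positive) and $\ell_9\ell_{10}\sum_{\{i,j\}}\ell_i\ell_j$ (negative), each of the latter ranging over the $\binom{8}{2}=28$ two-element subsets of an eight-element index set; every such monomial contributes $1$ to the coefficient of $s^4$, and the positive and negative monomials balance ($2+28$ against $2+28$), so this contribution has vanishing $s^4$-coefficient, the remaining summands there being of degree $3$ because they carry the bounded factor $\ell_1+1-\ell_5$.

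Next I would treat Lemma~\ref{lemma:edgetrivial}. Here the decisive simplification is $\ell_1+\cdots+\ell_{12}=n+4$, which rewrites each linear factor $n+c-\sum_{i\in S}\ell_i$ as a signed sum of chain-lengths; its coefficient of $s$ is then just $12$ minus the number of $\ell$'s subtracted, counted with multiplicity. The six degree-$4$ summands accordingly contribute $7$, $6$, $6$, $-7$, $-6$, $-6$ to the coefficient of $s^4$, summing to $0$, while the last summand carries $\ell_1+1-\ell_5$ and is of degree $3$. For Lemma~\ref{lemma:nontrivial1}, the part with factor $\ell_1+1-\ell_5$ is again of degree $3$, and the remaining explicit monomials split into seven positive and seven negative degree-$4$ products, giving $s^4$-coefficient $7-7=0$. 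Finally, Lemma~\ref{lemma:nontrivial2} is the single product $\ell_7\ell_9\ell_{11}$ times the factor $n+5-2\ell_5-\ell_7-\ell_9-\ell_{11}$; using $\ell_1+\cdots+\ell_{12}=n+4$, this factor equals $\ell_1+\ell_2+\ell_3+\ell_4+\ell_6+\ell_8+\ell_{10}+\ell_{12}-\ell_5+1$, whose coefficient of $s$ is $8-1=7$, so that this contribution alone has $s^4$-coefficient $7$.

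Adding the five contributions, the coefficient of $s^4$ in $\mu_5(G_n)-\mu_5(H_n)$ equals $0+0+0+0+7=7$, so that $\mu_5(G_n)-\mu_5(H_n)=7s^4+O(s^3)$, where, because $r$ ranges over the finite set $\{0,1,\ldots,11\}$ and every $\epsilon_i\in\{0,1\}$, the lower-order coefficients are bounded by an absolute constant independent of $n$. Consequently there is some $n_0$ beyond which $7s^4$ dominates all lower-order terms, whence $\mu_5(G_n)-\mu_5(H_n)>0$, i.e.\ $\mu_5(H_n)<\mu_5(G_n)$, for all $n\geq n_0$. I expect the main obstacle to be the exact cancellation of the $s^4$-coefficients in Lemmas~\ref{lemma:vertextrivial} and~\ref{lemma:edgetrivial}: this is pure bookkeeping, but it hinges on invoking $\sum_i\ell_i=n+4$ to identify the degree in $s$ of each linear factor and on a careful tally of the balancing monomials. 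Everything else, including the positivity of the surviving $C_4$-separating contribution, is routine.
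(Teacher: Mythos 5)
Your proposal is correct and follows essentially the same route as the paper: both start from the decomposition~\eqref{eq:total}, use Lemmas~\ref{lemma:unfair}--\ref{lemma:nontrivial2} to show the first four differences are $O(n^3)$ while the $C_4$-separating term contributes the dominant $7s^4$ (the paper writes this as $7\lfloor n/12\rfloor^4+O(n^3)$), and conclude that $\mu_5(G_n)-\mu_5(H_n)\to\infty$. Your explicit tally of the $s^4$-coefficients (the $2+28$ balance in Lemma~\ref{lemma:vertextrivial}, the $7+6+6-7-6-6=0$ cancellation in Lemma~\ref{lemma:edgetrivial} via $\sum_i\ell_i=n+4$, and the $7-7$ in Lemma~\ref{lemma:nontrivial1}) merely makes explicit the cancellations that the paper's terser $O(n^3)$ assertions leave implicit, and all of these checks are accurate.
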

\begin{proof}
For each $n\geq 13$, let $r$ and $s$ be unique integers such that $r\in \{0,1,\ldots,11\}$ and $n+4=12s+r$. 
Consider, for each $j\in \{1,2,3,4,5\}$, the sequence $a^{(j)}_{n}$ defined as follows:
\begin{align*}
a^{(1)}_n &=\phi_{12}^{(5)}(\ell_1',\ell_2',\ldots,\ell_{12}')-\phi_{12}^{(5)}(\ell_1,\ell_2,\ldots,\ell_{12}),\\
a^{(2)}_n &=\mu_5^{\mathrm V}(G_n)-\mu_5^{\mathrm V}(H_n),\\
a^{(3)}_n &=\mu_5^{\mathrm E}(G_n)-\mu_5^{\mathrm E}(H_n),\\
a^{(4)}_n &=\mu_5^{\mathrm P_3}(G_n)-\mu_5^{\mathrm P_3}(H_n),\\
a^{(5)}_n &=\mu_5^{\mathrm C_4}(G_n)-\mu_5^{\mathrm C_4}(H_n).\\
\end{align*}
Observe that $\ell_1+1-\ell_5 \in \{1,2\}$, and $\ell_i\in \{s,s+1\}$ for all $i\in \{1,2,\ldots,12\}$. 
By Lemma~\ref{lemma:unfair}, $a^{(1)}_n \in O(n^3)$. Similarly, by Lemmas~\ref{lemma:vertextrivial}, \ref{lemma:edgetrivial} and \ref{lemma:nontrivial1}, $a^{(j)}_n \in O(n^3)$ for each $j \in \{2,3,4\}$. Additionally, by Lemma~\ref{lemma:nontrivial2} we can see that $a_n^{(5)}=7(\Bigl\lfloor \frac{n}{12} \Bigr\rfloor)^4+q_n$ for some sequence $q_n \in O(n^3)$. Consequently, the difference $\mu_5(G_n)-\mu_5(H_n)$, which equals $\sum_{j=1}^{5}a^{(j)}_n$, tends to infinity with respect to $n$ thus there exists some $n_0$ such that $\mu_5(G_n)-\mu_5(H_n)>0$ for all $n\geq n_0$, as required. \qed
\end{proof}

Replacing each of the expressions~\eqref{term:unfair}-\eqref{term:nontrivial2} into equation~\eqref{eq:total} gives that $\mu_5(G_n)-\mu_5(H_n)>0$ for each integer $n$ such that $n\geq 167$.\\

We are in position to prove the main result of this work. 
\begin{theorem}\label{theorem:main2}
There are finitely many UMRG of corank $5$.   
\end{theorem}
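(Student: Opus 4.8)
The plan is to synthesize Propositions~\ref{proposition:min4} and~\ref{proposition:least} through the nonexistence criterion of Lemma~\ref{lemma:strategy}. Recall that a class has corank $5$ precisely when it is of the form $\mathcal{C}_{n,n+4}$, and such a class is nonempty precisely when $n \geq 5$. Since each $\mathcal{C}_{n,n+4}$ is finite, it suffices to show that all but finitely many of these classes contain no UMRG; the theorem then follows because a finite union of finite sets is finite, so there can be only finitely many UMRG of corank $5$ in total.

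First I would fix an arbitrary integer $n \geq n_0$, where $n_0$ is the threshold produced by Proposition~\ref{proposition:least}. Because the construction of $H_n$ requires $n \geq 13$, we have $n_0 \geq 13 \geq 8$, so Proposition~\ref{proposition:min4} applies throughout this range. By Proposition~\ref{proposition:min4}, $\mathcal{C}_{n,n+4}^{4} = \{G_n\}$, which meets the hypothesis ``$\mathcal{C}_{n,m}^{j} = \{G\}$'' of Lemma~\ref{lemma:strategy} with $j = 4$ and $G = G_n$ (and witnesses that $\mathcal{C}_{n,n+4}$ is nonempty).

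Next I would verify that $G_n$ fails to be $t$-optimal. Here $m - n + 1 = 5$, so $t$-optimality is exactly the property $\mu_5(G_n) \leq \mu_5(H)$ for every $H \in \mathcal{C}_{n,n+4}$. Proposition~\ref{proposition:least} exhibits a graph $H_n \in \mathcal{C}_{n,n+4}$ with $\mu_5(H_n) < \mu_5(G_n)$, so $G_n$ does not minimize $\mu_5$ and is therefore not $t$-optimal. Applying Lemma~\ref{lemma:strategy} now yields that $\mathcal{C}_{n,n+4}$ contains no UMRG for every $n \geq n_0$.

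Consequently, any UMRG of corank $5$ must lie in some class $\mathcal{C}_{n,n+4}$ with $5 \leq n < n_0$. There are only finitely many such values of $n$, and each class $\mathcal{C}_{n,n+4}$ is finite, so the total number of UMRG of corank $5$ is finite, completing the proof. I do not expect any genuine obstacle in this concluding step: all the difficulty has already been absorbed into Propositions~\ref{proposition:min4} and~\ref{proposition:least}, and the theorem itself is a short bookkeeping argument that packages those two results through Lemma~\ref{lemma:strategy}.
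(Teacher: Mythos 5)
Your proposal is correct and follows essentially the same route as the paper: invoke Proposition~\ref{proposition:min4} to get $\mathcal{C}_{n,n+4}^{4}=\{G_n\}$, invoke Proposition~\ref{proposition:least} to witness that $G_n$ is not $t$-optimal via $\mu_5(H_n)<\mu_5(G_n)$, and conclude via Lemma~\ref{lemma:strategy} that no class $\mathcal{C}_{n,n+4}$ with $n\geq n_0$ contains a UMRG. Your added remarks --- that $t$-optimality here means minimizing $\mu_5$ since $m-n+1=5$, and that the finitely many remaining classes are each finite --- merely make explicit what the paper leaves implicit.
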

\begin{proof}
Let $n$ be any positive integer such that $n\geq n_0$, where $n_0$ is the least positive integer satisfying Proposition~\ref{proposition:least}. It is enough to show that there is no UMRG in $\mathcal{C}_{n,n+4}$. On the one hand, consider the graph $G_n$ given by Definition~\ref{definition:Gn}. 
By Proposition~\ref{proposition:min4}, we know that 
$\mathcal{C}_{n,n+4}^{4}=\{G_n\}$. On the other hand, consider the graph $H_n$ given by Definition~\ref{definition:Hn}. As $n\geq n_0$, by Proposition~\ref{proposition:least} we conclude  that $\mu_5(H_n)<\mu_5(G_n)$. The statement follows by Lemma~\ref{lemma:strategy}.    \qed
\end{proof}


\section*{Acknowledgments}
This work is partially supported by City University of New York project entitled \emph{On the problem of characterizing graphs with maximum number of spanning trees}, grant number 66165-00. The author wants to thank Dr. Mart\'in Safe for his helpful  comments that improved the presentation of this manuscript.

\bibliographystyle{plain}

\end{document}